\newtheorem{theorem}{Theorem}[section]
\newtheorem{lemma}[theorem]{Lemma}
\newtheorem{proposition}[theorem]{Proposition}
\newtheorem{corollary}[theorem]{Corollary}
\theoremstyle{definition}
\newtheorem{definition}[theorem]{Definition}
\newtheorem{example}[theorem]{Example}
\newtheorem*{theorem*}{Theorem} 
\newtheorem*{definition*}{Definition}
\newtheorem*{prop*}{Proposition}
\newtheorem*{corollary*}{Corollary}
\newtheorem{claim}{Claim}[section]
\title{Shadowing and Stability of Non-Invertible $p$-adic Dynamics}
\author{D.A. Caprio, F. Lenarduzzi, A. Messaoudi,  I. Tsokanos}
\date{ }
\begin{document}

	\maketitle
	
	\abstract{
		The stability theory of compact metric spaces with positive topological dimension is a well-established area in Dynamical Systems. A central result, attributed to Walters,  connects the concepts of topological stability and the shadowing property in invertible dynamics. 
		In contrast, zero-dimensional stability theory is a developing field, with an analogue of Walters' theorem for Cantor spaces being fully established only in 2019 by Kawaguchi.
		
		In this paper, we investigate the shadowing and stability properties of non-invertible dynamics in zero-dimensional spaces, focusing on the $p$-adic integers $\mathbb{Z}_{p} $ and the $p$-adic numbers $\mathbb{Q}_{p}$, where $p \geq 2$ is a prime number. The main result provides sufficient conditions under which the following families of maps exhibit strong shadowing and stability properties: 1) $p$-adic dynamical systems that are right-invertible through contractions, and 2) left-invertible contractions. Consequently, new examples of stable $p$-adic dynamics are presented.
		
	}
	
	\medskip 
	\noindent \textbf{Keywords:}  $p$-adic groups, Shadowing, Lipschitz structural stability, topological stability.

	\medskip
	\noindent\textbf{Mathematics Subject Classification (2020):} 
	Primary 54H20, 11S82; Secondary 37C50, 34D30.

	\section{Introduction}
	
	A discrete dynamical system is a pair $ \left(  X, f  \right) $ where $X$ is a metric space and $f: X \to X$ is a continuous map (called dynamic). 
	One of the key objectives in the theory of dynamical systems is to study the asymptotic behavior of the orbits of points. Specifically, the orbit of a point $x \in X$ is the set $ O(x)=\{f^n(x),   n \in  \mathbb{N} \}$, where  $f^n$ denotes the $n$-th iteration of $f$, and $\mathbb{N} $ represents the set of non-negative integers.

	\paragraph{}

	In this paper, we investigate the shadowing and stability properties of a class of non-invertible dynamics on the $p$-adic group $ \left(  \mathbb{X}_{p},  \left| \left|  \cdot  \right|  \right|_{p}  \right) $, where $p\ge 2$ is a prime number, and $\mathbb{X}_{p}$ refers to either the locally compact space of the $p$-adic numbers $\mathbb{Q}_{p}$ or to the compact space of the $p$-adic integers $\mathbb{Z}_{p}$. Here, $ \left| \left|  \cdot  \right|  \right|_{p}$ denotes the usual $p$-adic norm. 
	The study of dynamical systems over $\mathbb{Z}_{p}$ or $\mathbb{Q}_p$ is an emerging field (see, for example,  \cite{Bastos-Caprio-Messaudi_Shadowing_and_Stability_in_p-adic_Dynamics},  \cite{Bryk_Silva-Measurable_Dynamics_of_simple_p-adic_Polynomials}, \cite{Darji_Concalves_Sobottka-Shadowing_Finite_Order_Shifts_and_Ultrametric_Spaces},  \cite{Durand_Paccaut-Minimal_Polynomial_Dynamics_on_the_set_of_3-adic_Integers}, \cite{Fan_Li_Yao-Strict_Ergodicity_of_Affine_p-adic_dynamical_systems_on_Zp}). This research has applications in fields such as physics, cognitive science, and cryptography among others (see \cite{Anashin-The_non-Archimedean_theory_of_discrete_systems}, \cite{Khrennikov_Nilsson-p-adic_Deterministic_and_Random_Dynamics},  \cite{Krhenikov_Oleschko_Lopez-Applications_of_p-adic_numbers_from_physics_to_geology}).
	
	Throughout this paper, for a continuous bounded map $f: \mathbb{X}_{p} \to \mathbb{X}_{p}$, the supremum norm is denoted by $ \left| \left|  f  \right|  \right|_{ \infty } = \underset{x \in \mathbb{X}_{p}}{\sup}  \left| \left|  f \left(  x  \right)   \right|  \right|_{p}$. 
	For $ x \in \mathbb{X}_{p}$ and $ \rho > 0$, the ball $B \left(  x, \rho  \right) $, centered at $x$ with radius $\rho$, is taken with respect to the $p$-adic norm $ \left| \left|  \cdot  \right|  \right|_{p}$.
	
	The dynamical definitions and concepts presented below are formulated for the space $\mathbb{X}_{p}$, but they can be readily adapted to general metric spaces.

	\paragraph{}
	The stability theory for compact metric spaces with positive topological dimension is well established \cite{Aoki-Topological_Dynamics, Pilyugin-Shadowing_in_Dynamical_Systems}.  Its fundamental result describes the relation between stability and the pseudo-orbit tracing property \cite{Walters_On_the_Pseudo_Orbit_Tracing_Property_and_its_relationship_to_Stability}. However, a zero-dimensional stability theory, particularly for Cantor spaces, has only recently begun to develop (see \cite{Bastos-Caprio-Messaudi_Shadowing_and_Stability_in_p-adic_Dynamics}, \cite{Kawaguchi_Topological_Stability_and_Shadowing_of_Zero-dimensional_Dynamical_Systems}).

	\smallskip
	The concept of stability was introduced by Andronov \& Pontrjagin \cite{Andronov_Pontrjagin-Systemes_Grossiers} and plays a fundamental role in various branches of dynamical systems theory. 
	A homeomorphism (respectively, a continuous map) $ f: \mathbb{X}_{p} \to \mathbb{X}_{p}$ is said to be \textit{structurally stable} if there is $\delta > 0$ such that for all homeomorphisms (respectively, for all continuous maps) $g: \mathbb{X}_{p} \to \mathbb{X}_{p}$ satisfying $ \left| \left|  f - g  \right|  \right|_{ \infty } \le \delta$, the system $ \left(  \mathbb{X}_{p}, f  \right) $ is \textit{topologically conjugate} to the system $ \left(  \mathbb{X}_{p}, g  \right) $; that is, there exists a homeomorphism $h: \mathbb{X}_{p} \to \mathbb{X}_{p}$ such that $f \circ h= h \circ g$. 
	The map $f$ is \textit{topologically stable} if for every $ \epsilon  > 0$, there exists $\delta  > 0$ such that, whenever $g: \mathbb{X}_{p} \to \mathbb{X}_{p}$ is a continuous map satisfying $ \left| \left|  f - g  \right| \right|  \le \delta$, there exists a continuous map $h: \mathbb{X}_{p} \to \mathbb{X}_{p}$ such that $ \left| \left|  h - id  \right|  \right|_{ \infty } \le  \epsilon $ and $ f \circ h = h \circ g$, where $id: \mathbb{X}_{p} \to \mathbb{X}_{p}$ denote the identity map on $\mathbb{X}_{p}$. 
	If, in addition, the map $h$ can be chosen to be surjective (respectively, a homeomorphism), then $f$ is said to be \textit{strongly topologically stable} (respectively, \textit{strongly structurally stable}). 
	
	\smallskip
	
	The concept of the pseudo-orbit tracing property, also known as shadowing, originates from the works of Bowen \cite{Bowen-Omega-limit_Sets_for_Axiom-A_Diffeomorphisms} \& Sinai \cite{Sinai-Gibbs_Measures_in_Ergodic_Theory}. Given a map $f: \mathbb{X}_{p} \to \mathbb{X}_{p}$ and $\delta  > 0$, a sequence $ \left(  x_{n}   \right)_{n \in \mathbb{N} }$ is called a \textit{$\delta$-pseudo-orbit} of $f$ if $ \left| \left|  f \left(  x_{n}  \right)  - x_{n+1}  \right|  \right|_{p} \le \delta $ for all $ n \in \mathbb{N} $.   
	A continuous map $f: \mathbb{X}_{p} \to \mathbb{X}_{p} $ is said to exhibit \textit{shadowing} (or \textit{positive shadowing}) if for every $\epsilon > 0$, there exists $\delta > 0$ such 
	that every $\delta$-pseudo-orbit $ \left(  x_n   \right)_{n \in\mathbb{N} }$ of $f$ is \textit{$\epsilon$-shadowed} by a real orbit of $f$; that is, there exists 
	$x \in \mathbb{X}_{p}$ such that $  \left| \left|  x_{n} - f^{n} \left(  x  \right)   \right|  \right|_{p} \leq \epsilon $ for all  $n \in \mathbb{N} $. 
	In this context, shadowing and positive shadowing are equivalent properties for continuous maps. 
	For homeomorphisms $f: \mathbb{X}_{p} \to \mathbb{X}_{p}$, the term \textit{shadowing} is used when the index set $\mathbb{N} $ is replaced by $\mathbb{Z} $ in the above definition.

	In many applications, the concept of shadowing is closely related to that of expansivity. A continuous map (respectively, a homeomorphism) $f: \mathbb{X}_{p} \to \mathbb{X}_{p}$ is said to be \textit{expansive} if there exists a positive constant $c > 0$ such that, for all distinct $x,y \in \mathbb{X}_{p}$, there exists $n \in \mathbb{N} $ (respectively, $n \in \mathbb{Z} $ for a homeomorphism) such that $  \left| \left|  f^{n} \left(  x  \right)  -  f^{n} \left(  y  \right)   \right|  \right|_{p} \ge c $.

	\smallskip
	
	Walter \cite{Walters_On_the_Pseudo_Orbit_Tracing_Property_and_its_relationship_to_Stability} demonstrated that every shadowing and expansive homeomorphism on a compact metric space is topologically stable. Furthermore, if the underlying space has a positive topological dimension, the homeomorphism is strongly topologically stable. Conversely, every topologically stable homeomorphism on a compact metric space with positive topological dimension is shadowing (see \cite{Walters_On_the_Pseudo_Orbit_Tracing_Property_and_its_relationship_to_Stability} for dimensions greater than $1$ and \cite{Morimoto-Stochastically_Stable_Diffeomorphisms_and_Takens_Conjecture} for dimension $1$). Recently, Kawaguchi \cite{Kawaguchi_Topological_Stability_and_Shadowing_of_Zero-dimensional_Dynamical_Systems} proved that this result also holds for Cantor spaces.

	\paragraph{}
	
	Lipschitz structural stability is another important type of stability extensively studied in the literature. Given $\delta  >0$, we define  
	$$ Lip_{\delta} \left(  \mathbb{X}_{p}  \right)  := \quad  \left\{  \phi : \mathbb{X}_{p} \to \mathbb{X}_{p}:   Lip \left(  \phi  \right)  \le \delta \quad  
	\text{and} \quad  \left| \left|  \phi  \right|  \right|_{ \infty } \le \delta   \right\}  $$ 
	to be the set of \textit{$\delta$-Lipschitz maps} over $\mathbb{X}_{p}$, where  
	$$ Lip \left(  \phi  \right)  := \quad \sup_{x  \neq y} {  \left| \left|   \phi(x) - \phi(y)  \right|  \right|_{p} \over  \left| \left|  x  -y  \right|  \right|_{p} }  $$
	denotes the \textit{Lipschitz constant} of $\phi$. Notice that, for an arbitrary continuous map $\phi: \mathbb{X}_{p} \to \mathbb{X}_{p}$, the Lipschitz constant $Lip \left(  \phi  \right) $ may be infinite.

	A map $f: \mathbb{X}_{p} \to \mathbb{X}_{p} $ is said to be \textit{Lipschitz structurally stable} if there exists $\delta > 0$ sufficiently small such that, for every $\phi \in Lip_{\delta} \left(  \mathbb{X}_{p}  \right) $, $f$ is topologically conjugate to $f + \phi$. 
	Furthermore, $f$ is \textit{strongly Lipschitz structurally stable} if, for every $ \epsilon > 0$, there exists $ \delta  > 0$ such that for every $\phi \in Lip_{\delta} \left(  \mathbb{X}_{p}  \right) $, the maps $f$ and $f+ \phi$ are conjugated by a homeomorphism $h$ that is $ \epsilon $-close to the identity, i.e., $ \left| \left|  h - id  \right|  \right|_{ \infty } \le  \epsilon $.

	It was shown in \cite{Kingsbery_Levin_Preygel_Silva-Dynamics_of_the_p-adic_shift_and_applications} that the shift map $\sigma: \mathbb{Z}_{p} \to \mathbb{Z}_{p} $, given by the formula 
	\begin{equation}\label{Eq1_Shift Map Zp}  
		\sigma  \left(  \sum_{i=0}^{+\infty} a_i p^{i}  \right)    :=   \sum_{i=0}^{+\infty} a_{i+1} p^{i}, \quad \text{for all }   \sum_{i=0}^{+\infty} a_i p^{i} \in \mathbb{Z}_{p},   \text{where }  a_i \in \{0, \ldots, p-1\} ,
	\end{equation}
	is strongly Lipschitz structurally stable. 
	More generally, this result holds for every $(p^{-k}, p^m)$-locally scaling map \cite{Bastos-Caprio-Messaudi_Shadowing_and_Stability_in_p-adic_Dynamics}. A map $f: \mathbb{Z}_{p} \to \mathbb{Z}_{p}$ is \textit{$  \left(  p^{-k}, p^{m}  \right) $-locally scaling} if 
	$ \left| \left|  f(x) - f(y)  \right|  \right|_{p}  =  p^{m}   \left| \left|  x - y  \right|  \right|_{p} $ whenever $ \left| \left|  x- y  \right|  \right|_{p} \le p^{-k}$, where $1 \leq m \leq k$  are integer numbers. 
	This class of maps was introduced in \cite{Kingsbery_Levin_Preygel_Silva-Dynamics_of_the_p-adic_shift_and_applications} and generalizes the concept of the shift map $ \sigma $, as $ \sigma $ is a $ \left(  p^{-1}, p  \right) $-locally scaling map. Locally scaling maps enjoy rich dynamical properties; they are shadowing, expansive, and Lipschitz structurally stable \cite{Bastos-Caprio-Messaudi_Shadowing_and_Stability_in_p-adic_Dynamics}.
	
	Families of maps $f: \mathbb{Q}_p \to \mathbb{Q}_p$  with rich dynamical properties also exist in the space of $p$-adic numbers. For instance, every homeomorphic \textit{contraction} (i.e. $Lip \left(  f  \right)   < 1$) and \textit{dilation} (i.e. $Lip \left(  f^{-1} \right)  < 1$) is shadowing, expansive, and Lipschitz structurally stable \cite{Bastos-Caprio-Messaudi_Shadowing_and_Stability_in_p-adic_Dynamics}.

	\paragraph{}	
	Shadowing and stability properties of dynamical systems have also been studied in the context of linear dynamics, where $\mathcal{X}$ is a Banach space and $f: \mathcal{X} \to \mathcal{X}$ is a linear continuous map. For instance, see \cite{Bernardes_Messaoudi-Shadowing_and_Structural_Stability_for_Operators}, \cite{Bernarde_Cirilo_Darji_Messaoudi_Pujals-Expansivity_and_Shadowing_in_Linear_Dynamics}, and \cite{Mahler-An_Interpolation_Series_for_Continuous_Functions_of_a_p-adic_Variable}. However, the techniques employed in this context differ from those used in the present work.

	\paragraph{}
	
	Classic stability results typically focus on homeomorphic dynamics, as demonstrated by Walter's Theorem \cite{Walters_On_the_Pseudo_Orbit_Tracing_Property_and_its_relationship_to_Stability}. Consequently, much of the research has been devoted to relaxing the conditions imposed by Walter. For example, one key question is: under what conditions is a (left or right) invertible dynamic stable? 
	
	\medskip
	
	One of the main results of this paper (Theorem \ref{Theor1_Dilations in p-adics}) concerns continuous right-invertible maps $f:\mathbb{X}_{p} \to \mathbb{X}_{p} $. Specifically, we assume that $f$ has a family of contractions  $R_{i}:\mathbb{X}_p\rightarrow \mathbb{X}_p$, indexed by $i \in \mathcal{I}$, as its right inverses, such that the images $R_{i} \left(  \mathbb{X}_{p}  \right) $, for $i \in \mathcal{I}$, are open and partition the space $\mathbb{X}_{p}$. Under these conditions, the map $f$ is shadowing, strongly Lipschitz structurally stable, and topologically stable. 
	
	As a consequence, new examples of non-invertible stable $p$-adic dynamics are obtained, while many already known examples are recovered (e.g., the $ \left(  p^{-k}, p^{k}  \right) $-locally scaling maps in $\mathbb{Z}_{p}$ and homeomorphic dilations in $\mathbb{Q}_{p}$). Additionally, this result is fairly complete; a right inverse contraction $R$ does not necessarily imply shadowing and stability conclusions for $f$ (Theorem \ref{Theor1_Single Right Inverse does not suffice for shadowing}). 
	To provide a counterexample, Kawaguchi's result \cite[Theorem 1.1]{Kawaguchi_Topological_Stability_and_Shadowing_of_Zero-dimensional_Dynamical_Systems} is extended to the class of uniformly continuous, topologically stable maps in $\mathbb{X}_{p}$ that are nowhere locally constant (i.e., there is no open ball where $f$ is constant). It is shown that such maps are shadowing.

	Complementary to the result above, sufficient conditions are provided for left-invertible contractions $R: \mathbb{X}_{p} \to \mathbb{X}_{p}$ to exhibit Lipschitz structural stability. 
	Throughout this paper, the contraction $R$ is said to be \textit{scaling} if there exists a function $\kappa:  \left\{  p^{n}:   n \in \mathbb{Z}  \right\}  \to  \left\{  p^{n}:   n \in \mathbb{Z}  \right\} \cup  \left\{  0  \right\} $ such that 
	\begin{equation}\label{Eq_Scaling Maps}
		\left| \left|  R \left(  x  \right)  - R \left(  y  \right)   \right|  \right|_{p}   =   \kappa \left(   \left| \left|  x - y  \right|  \right|_{p}  \right)  , \quad \text{for all } x,y \in \mathbb{X}_{p} .
	\end{equation} 
	It is shown in Theorem \ref{Theor3_Contractions in p-adics} that every open bi-Lipschitz contraction on $\mathbb{Z}_{p}$ and every bi-Lipschitz scaling contraction on $\mathbb{Q}_{p}$ is strongly Lipschitz structurally stable. 
	Moreover, the bi-Lipschitz condition is essential for this stability result, as illustrated by examples of non–bi-Lipschitz contractions that fail to be Lipschitz structurally stable.

	\paragraph{}	
	The paper is structured as follows. In Section \ref{Sec_Preliminaries and Notations}, we present the necessary preliminaries and define the notations employed throughout the paper.  
	The main results are stated in Section \ref{Sec_Main Results}, while Section \ref{Sec_Proof of the main results} is dedicated to the proofs of these results.

	\section{Preliminaries and Notation}\label{Sec_Preliminaries and Notations}
	
	Let $p \ge 2$ be a prime number. The ring of the $p$-adic integers $\mathbb{Z}_{p}$ can be viewed as the set of sequences of the form $  \left(  a_{i}   \right)_{ i \in \mathbb{N} }$, where  $a_{i} \in  \left\{  0, 1, \dots, p-1  \right\} $ for all $i \in  \mathbb{N} $. 
	Each element of $ x \in \mathbb{Z}_{p}$ can be uniquely expressed as a series 
	\begin{equation}\label{Eq2_p-adic Integers}
		x = \sum_{i= 0}^{+ \infty }  a_{i}(x) p^{i}, \quad  \text{where }  a_{i}(x) \in  \left\{  0, \dots, p-1  \right\}    \text{ for } i \ge 0, 
	\end{equation}
	On $\mathbb{Z}_{p}$, we can define the absolute value $ \left| \left|  \cdot  \right|  \right|_{p}$ by
	\begin{equation*}
		\left| \left|  x  \right|  \right|_{p} :=   \begin{cases} 
			p^{-u_{p}(x)}, & \text{if }  x  \neq 0 \\ 
			0, & \text{if } x =0 ,
		\end{cases} 
	\end{equation*}
	where $u_{p} \left(  x  \right) :=  \min \left\{  i \in  \mathbb{N} :     a_{i} \left(  x  \right)   \neq 0  \right\} $ denotes the smallest index of the non-zero digits in the $p$-adic expansion of $x$. 
	The absolute value $ \left| \left|  \cdot  \right|  \right|_{p}$ induces a metric $d_{p}$ on $\mathbb{Z}_{p}$ given by $d_{p}(x,y) =  \left| \left|  x -y  \right|  \right|_{p}$ for all $x,y \in \mathbb{Z}_{p}$. It is straightforward to verify that $d_{p}$ is an ultrametric, since 
	\begin{equation}\label{Eq2_p-adic Norm Triangle Inequality}
		\left| \left|  x -y  \right|  \right|_{p}   \le   \max \left\{   \left| \left|  x  \right|  \right|_{p},  \left| \left|  y  \right|  \right|_{p}  \right\}  \quad \text{for all } x,y \in \mathbb{Z}_{p} .    
	\end{equation}
	Furthermore, it is established that $\mathbb{Z}_{p}$ is the completion of $\mathbb{Z} $ with respect to the metric $d_{p}$ and that $\mathbb{Z}_{p}$ is compact.

	\paragraph{}
	The field of fractions of $\mathbb{Z}_{p}$ is denoted by $\mathbb{Q}_{p}$, which is defined as  
	$$ \mathbb{Q}_{p}    =    \mathbb{Z}_{p} \left[  {1 \over p} \right]    =    \underset{i \ge 0}{\bigcup} p^{-i} \cdot \mathbb{Z}_{p} .$$
	Any element $x \in \mathbb{Q}_{p}\backslash  \left\{  0  \right\} $ can be uniquely written as 
	\begin{equation*}\label{Eq2_p-adic Numbers}
		x  =  \sum_{i = u_{p} \left(  x  \right)  }^{+ \infty }  a_{i}(x) p^{i},   \text{ where } a_{i}(x) \in  \left\{  0, \dots, p-1  \right\}  \text{ for } i \ge u_{p} \left(  x  \right)  \text{ and } a_{u_{p} \left(  x  \right)  }  \neq 0. 
	\end{equation*}
	
	We define the quantity $u_{p} \left(  x  \right) $, the absolute value $ \left| \left|  \cdot  \right|  \right|_{p}$ and the metric $d_{p} \left(  \cdot, \cdot  \right) $  on $\mathbb{Q}_{p}$ analogously to the case of $\mathbb{Z}_{p}$.  It is shown that $\mathbb{Q}_{p}$ is the completion of $\mathbb{Q} $ under the $p$-adic metric $d_{p}$. Moreover, $\mathbb{Q}_{p}$ is a locally compact set, and $\mathbb{Z}_{p}$ is the closed unit ball of $\mathbb{Q}_{p}$ centered at zero. 
	
	\paragraph{} 
	Throughout the paper, given a $p$-adic number $x \in \mathbb{Q}_{p}$, the quantities $ \left\lfloor  x   \right\rfloor_{p}$ and $ \left\{  x  \right\}_{p}$ represent the integer and fractional parts of $x$ in the $p$-adic sense, respectively.
	For more information on $p$-adic numbers, the reader is referred to \cite{Robert-A_Course_in_p-adic_Analysis}.

	\paragraph{}

	\section{Main Results}\label{Sec_Main Results}
	
	The first result of the paper is concerned with dynamics in $\mathbb{X}_{p}$ that admit contractions as right inverses. 
	
	\begin{theorem}\label{Theor1_Dilations in p-adics} 
		Let $f:\mathbb{X}_p \to \mathbb{X}_p$ be a continuous map, where $\mathbb{X}_{p} \in  \left\{  \mathbb{Z}_{p}, \mathbb{Q}_{p}  \right\} $. Assume that the map $f$ admits right inverses given by a family of contractions  $R_{i}:\mathbb{X}_p\rightarrow \mathbb{X}_p$, indexed by $i \in \mathcal{I}$, i.e., $f\circ R_{i} = id $ and  $ Lip  \left(  R_{i}  \right)  <1$, for each $i \in \mathcal{I}$.
		
		If 
		\begin{equation}\label{Eq1_Inverses Covering Property}
			\bigcup_{i \in \mathcal{I}} R_{i} \left(  \mathbb{X}_{p}  \right)    =   \mathbb{X}_{p} ,
		\end{equation}
		then the map $f$ has the shadowing property.
		Furthermore, if the images 
		\begin{equation}\label{Eq1_Inverses Topological Property}
			R_{i} \left(  \mathbb{X}_{p}  \right) ,   i \in \mathcal{I},   \text{ are open and pairwise disjoint},
		\end{equation}
		then the map $f$ is strongly Lipschitz structurally stable and topologically stable. 
	\end{theorem}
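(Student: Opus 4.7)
The plan is to prove the two assertions in order, using expansivity of $f$ as the bridge from shadowing to stability in the classical Walters style.

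For shadowing, first note that every Lipschitz ratio in $\X_p$ lies in $\lfp p^n \sep n\in\Z\rfp\cup\lfp 0\rfp$, so the hypothesis $Lip(R_i)<1$ sharpens automatically to a uniform bound $L:=\sup_i Lip(R_i)\le p^{-1}$. Given a $\delta$-pseudo-orbit $(x_n)_{n\in\N}$ of $f$, the covering hypothesis \eqref{Eq1_Inverses Covering Property} lets me pick indices $i_n\in\mc{I}$ with $x_n\in R_{i_n}(\X_p)$; the section identity then gives $x_n=R_{i_n}(f(x_n))$. Setting
\[
u_N \; := \; R_{i_0}\circ R_{i_1}\circ\cdots\circ R_{i_{N-1}}(x_N),
\]
the contraction estimate combined with the pseudo-orbit bound yields $\ldav u_{N+1}-u_N\rdav_p\le L^{N+1}\delta$, so $(u_N)$ is Cauchy and converges in $\X_p$ to a point $x$. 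Because $f\circ R_i=id$ peels off the outermost factor of every composition, applying $f^n$ and letting $N\to\infty$ gives, via the ultrametric triangle inequality, $\ldav f^n(x)-x_n\rdav_p\le L\delta$ for every $n\in\N$. Choosing $\delta=\epsilon$ then suffices.

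Next I would establish expansivity. The open partition \eqref{Eq1_Inverses Topological Property} makes $f|_{R_i(\X_p)}$ a bijection onto $\X_p$ with inverse $R_i$, so whenever $x,y$ lie in a common piece $\ldav f(x)-f(y)\rdav_p\ge L^{-1}\ldav x-y\rdav_p\ge p\ldav x-y\rdav_p$. One then shows there is $c>0$ such that $\ldav x-y\rdav_p<c$ forces $x,y$ into the same piece (in $\Z_p$ compactness reduces $\mc{I}$ to a finite set, the pieces are clopen, and their mutual distances are bounded below; in $\Q_p$ one needs a uniformity argument exploiting the local structure of the $R_i$). Two distinct orbits whose distance stayed below $c$ at every iterate would see $\ldav f^k(x)-f^k(y)\rdav_p$ multiplied by at least $p$ at each step, a contradiction, so $f$ is expansive with constant $c$. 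Combining shadowing with expansivity gives the standard Walters construction: for $g$ close to $f$ in sup norm, each $g$-orbit is a pseudo-orbit of $f$ that by shadowing is tracked by an $f$-orbit through a point $h(y)$, uniquely by expansivity, and uniqueness simultaneously gives that $h$ is continuous, $\epsilon$-close to $id$, and satisfies $f\circ h=h\circ g$, yielding topological stability.

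The hard step, and the one I expect to be the main obstacle, is upgrading $h$ to a homeomorphism in the strong Lipschitz structural stability conclusion. I would do this symmetrically: for $\phi\in Lip_\delta(\X_p)$ with $\delta$ small, construct right inverses $R'_i$ of $g:=f+\phi$ as the Banach fixed points of the contractions $z\mapsto R_i(y-\phi(z))$, whose Lipschitz constants $L\cdot Lip(\phi)\le L\delta$ make them genuine contractions of $\X_p$. Each $R'_i$ is then itself a contraction with $Lip(R'_i)\le L/(1-L\delta)<1$. The delicate point is verifying that the images $\{R'_i(\X_p)\}_{i\in\mc{I}}$ still form an open pairwise disjoint cover of $\X_p$; this requires exploiting the discreteness of the $p$-adic value group (and, in the $\Q_p$ case, some uniform control on the local ball sizes inside each original piece $R_i(\Q_p)$), and is the place where the hypothesis \eqref{Eq1_Inverses Topological Property} does genuine work beyond \eqref{Eq1_Inverses Covering Property}. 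Once this is done, $g$ satisfies the hypotheses of the theorem in its own right, the symmetric construction yields a continuous $h'$ with $g\circ h'=h'\circ f$, and uniqueness via expansivity forces $h'=h^{-1}$, so $h$ is the desired homeomorphism.
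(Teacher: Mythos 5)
Your shadowing argument is correct and is essentially an unrolled form of the paper's: where you build the shadowing point as the limit of the telescoping compositions $R_{i_0}\circ\cdots\circ R_{i_{N-1}}(x_N)$, the paper obtains the same point as the Banach fixed point of the operator $\lc u_n\rc \mapsto \lc R_{i_n}(x_{n+1}+u_{n+1})-x_n\rc$ on the space of bounded sequences; the choice of itinerary $i_n$ via $x_n=R_{i_n}(f(x_n))$ and the resulting bound $\ldav f^n(x)-x_n\rdav_p\le Lip\lc R_{i}\rc\cdot\delta$ are identical. Your construction of the perturbed right inverses $R_i'$ of $g=f+\phi$ as fixed points of $z\mapsto R_i(y-\phi(z))$ is also sound and matches the paper's Lemma \ref{Lem1_Right Inverse of Noises}; in fact the point you flag as delicate is automatic, since that construction gives $R_i'(\X_p)=R_i(\X_p)$ exactly (every $z\in R_i(\X_p)$ is the unique fixed point for $y=g(z)$), so openness and disjointness of the images are inherited verbatim.

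The genuine gap is the expansivity step on which your entire stability argument rests. Your claim that there exists a single $c>0$ such that $\ldav x-y\rdav_p<c$ forces $x$ and $y$ into the same piece $R_i(\X_p)$ is a \emph{uniform} Lebesgue-number statement for the partition $\lfp R_i(\X_p)\rfp_{i\in\mc{I}}$. In $\Z_p$ this follows from compactness (the pieces are clopen and $\mc{I}$ is forced to be finite), but in $\Q_p$ the hypothesis \eqref{Eq1_Inverses Topological Property} is purely local — each point of $R_i(\Q_p)$ has \emph{some} ball around it inside the piece, with no lower bound on the radius — and $\Q_p$ is not compact, so nothing prevents the pieces from becoming arbitrarily thin; the "uniformity argument exploiting the local structure of the $R_i$" that you defer is precisely the missing ingredient, and it does not follow from the stated hypotheses. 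Without a uniform expansivity constant, the Walters machinery breaks at its first step: the shadowing point $h(y)$ tracing the $g$-orbit of $y$ need not be unique, so $h$ is not well defined and its continuity cannot be extracted from uniqueness. The paper avoids expansivity altogether: it defines $h(x)=x+z_0(x)$ via the unique fixed point of a contraction $F_x$ on sequence space whose definition carries the itinerary $\lc i_n(x)\rc$, so uniqueness comes from the Banach fixed-point theorem rather than from expansivity, and continuity of $h$ is proved by a local argument (for each $n$ there is $\e_n>0$ with $B\lc g^m(x),\e_n\rc\sub R_{i_m(x)}\lc\X_p\rc$ for $m\le n$) that needs only pointwise, not uniform, openness. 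To repair your proof you would either have to prove the uniform Lebesgue number for $\Q_p$ (which I do not believe is available in this generality) or switch to an itinerary-based uniqueness argument of the paper's type.
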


	Theorem \ref{Theor1_Dilations in p-adics} immediately applies to homeomorphic dilations of $\mathbb{Q}_{p}$, such as the shift map $S: \mathbb{Q}_{p} \to \mathbb{Q}_{p}$ given by $ S(x) :=   p^{-1} \cdot x $. 
	An example of a map on $\mathbb{Q}_{p}$ that is not a homeomorphism but satisfies both conditions \eqref{Eq1_Inverses Covering Property} \& \eqref{Eq1_Inverses Topological Property} is the map 
	$$ f \left(  x  \right)  :=     \left\{  p^{-1} x  \right\}{p}  +  \left\lfloor  p^{-2}  x   \right\rfloor_{p}    =   p^{-1} \sum_{i = u_{p} \left(  x  \right)  }^{0} a_{i} \left(  x  \right)  p^{i} + p^{-2} \sum_{i=2}^{+ \infty } a_{i} \left(  x  \right)  p^{i},   $$
	where $x  = \sum_{i = u_{p} \left(  x  \right) }^{+ \infty } a_{i} \left(  x  \right)  p^{i} \in \mathbb{Q}_{p}$. 
	In this case, the map $f$ admits as right inverses the contractions 
	\begin{equation}\label{Eq1_rho-open Contractions}
		R_{a} \left(  x  \right)  := p \cdot  \left\{  x  \right\}_{p} + ap + p^{2}\cdot  \left\lfloor  x   \right\rfloor_{p} , \quad \text{with } a \in  \left\{  0, \dots, p-1  \right\}  .
	\end{equation} 
	Examples of maps in $\mathbb{Z}_{p}$ that satisfy the assumptions of Theorem \ref{Theor1_Dilations in p-adics} can easily be found or constructed, e.g., the shift map $\sigma: \mathbb{Z}_{p} \to \mathbb{Z}_{p}$ defined in \eqref{Eq1_Shift Map Zp}. The following corollary generalizes the example of the shift map $\sigma : \mathbb{Z}_{p} \to \mathbb{Z}_{p}$ to the class of $ \left(  p^{-k}, p^{k}  \right) $-locally scaling maps. 
	
	\begin{corollary}\label{Cor1_Locally Scaling Maps}
		Let $f: \mathbb{Z}_{p} \to \mathbb{Z}_{p}$ be a $ \left(  p^{-k}, p^{k}  \right) $-locally scaling map, where $k\ge 1$ is a positive integer. Then, $f$ is shadowing, strongly Lipschitz structurally stable, and topologically stable.
		
	\end{corollary}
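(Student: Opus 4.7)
The plan is to reduce the corollary to Theorem \ref{Theor1_Dilations in p-adics} by exhibiting, for a $\lc p^{-k}, p^k \rc$-locally scaling map $f: \Z_p \to \Z_p$, a family of contractions $R_a: \Z_p \to \Z_p$ indexed by $a \in \lfp 0, 1, \dots, p^k - 1 \rfp$ that serve as right inverses of $f$ and whose images form a clopen partition of $\Z_p$.

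First, I would write $\Z_p$ as the disjoint union of the balls $B_a := a + p^k \Z_p$ of radius $p^{-k}$, one for each residue $a$ modulo $p^k$. The defining scaling identity $\ldav f(x) - f(y) \rdav_p = p^k \cdot \ldav x - y \rdav_p$ whenever $\ldav x - y \rdav_p \le p^{-k}$ ensures that $f|_{B_a}$ is injective and that the rescaled map $y \mapsto f\lc a + p^k y \rc$ is an isometric self-embedding of $\Z_p$.

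The main step is to show that every isometric embedding $\iota: \Z_p \to \Z_p$ is automatically surjective. Here I would use a digit-level observation: since $\ldav z \rdav_p \le p^{-n}$ iff $z \in p^n \Z_p$, distance preservation is equivalent to the statement that $\iota\lc x \rc \equiv \iota\lc y \rc \pmod{p^n}$ iff $x \equiv y \pmod{p^n}$ for every $n \in \N$. Therefore $\iota$ descends to a well-defined injective self-map of the finite set $\Z_p / p^n \Z_p$, which is then bijective by finiteness, and the resulting family is consistent under truncation. Passing to the inverse limit produces a two-sided inverse of $\iota$ on $\Z_p$. Applied to $y \mapsto f\lc a + p^k y \rc$, this shows that $f|_{B_a}: B_a \to \Z_p$ is a bijection, and I define $R_a := \lc f|_{B_a} \rc^{-1}$.

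Each $R_a$ then scales distances by exactly $p^{-k}$, so $Lip\lc R_a \rc = p^{-k} < 1$ and each $R_a$ is a contraction. The images $R_a\lc \Z_p \rc = B_a$ are clopen in $\Z_p$ (being ultrametric balls), pairwise disjoint, and cover $\Z_p$, which verifies both \eqref{Eq1_Inverses Covering Property} and \eqref{Eq1_Inverses Topological Property}. The three conclusions of the corollary then follow directly from Theorem \ref{Theor1_Dilations in p-adics}. The main obstacle is establishing the surjectivity of the rescaled isometric embedding on $\Z_p$; once that is granted, the rest amounts to routine bookkeeping about balls, Lipschitz constants, and checking the two geometric hypotheses of Theorem \ref{Theor1_Dilations in p-adics}.
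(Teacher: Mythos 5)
Your proposal is correct, but it takes a different route from the paper. The paper's proof invokes Furno's structural lemma (Lemma \ref{Lem1_Locally Scaling Maps}), which factors $f = S^{k}\circ w$ with $w$ a bijective isometry, and then obtains the right inverses as $R_{\bm{a}} = w^{-1}\circ R_{a_{1}}\circ \cdots \circ R_{a_{k}}$, where the $R_{a_i}$ are the standard right inverses of the shift; the images of these compositions are again balls of radius $p^{-k}$ partitioning $\Z_{p}$, so Theorem \ref{Theor1_Dilations in p-adics} applies. You instead work directly with the restriction of $f$ to the residue balls $B_{a} = a + p^{k}\Z_{p}$ and supply the one genuinely nontrivial ingredient yourself: that the rescaled map $y \mapsto f\lc a + p^{k} y\rc$ is an isometric self-embedding of $\Z_{p}$ and hence surjective. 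Your finiteness/inverse-limit argument for that surjectivity is sound (it is the $\Z_{p}$-specific instance of the classical fact that an isometric embedding of a compact metric space into itself is onto), and the remaining verifications --- $Lip\lc R_{a}\rc = p^{-k} < 1$, the images being clopen, disjoint, and covering --- are exactly conditions \eqref{Eq1_Inverses Covering Property} and \eqref{Eq1_Inverses Topological Property}. The trade-off is that your argument is self-contained and elementary, at the cost of reproving in situ what the paper outsources to the citation; the paper's version is shorter but leans on an external decomposition theorem. Both constructions ultimately yield the same family of right inverses up to relabelling, namely the inverse branches of $f$ on the balls of radius $p^{-k}$.
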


	The conclusion of Theorem \ref{Theor1_Dilations in p-adics} is optimal, as under its assumptions, one cannot infer strong topological stability or (strong) structural stability. Indeed, notice that the shift map $ \sigma : \mathbb{Z}_{p} \to \mathbb{Z}_{p}$ (defined in \eqref{Eq1_Shift Map Zp}) satisfies the assumptions of Theorem \ref{Theor1_Dilations in p-adics} but it is neither structurally stable nor strongly topologically stable \cite{Bastos-Caprio-Messaudi_Shadowing_and_Stability_in_p-adic_Dynamics}.

	\paragraph{}
	
	The covering condition \eqref{Eq1_Inverses Covering Property} is crucial for ensuring the shadowing and stability properties of the map $f: \mathbb{X}_{p} \to \mathbb{X}_{p}$.

	\begin{theorem}\label{Theor1_Single Right Inverse does not suffice for shadowing} 
		There exists a continuous map $f: \mathbb{X}_{p} \to \mathbb{X}_{p} $ which admits a contraction $R: \mathbb{X}_{p} \to \mathbb{X}_{p}$ as a right inverse, but $f$ is neither shadowing, nor Lipschitz structurally stable, nor topologically stable.  
	\end{theorem}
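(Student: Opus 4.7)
The plan is to construct an explicit $f$ with a contraction right inverse $R$ that simultaneously fails all three properties. I treat $\X_{p} = \Z_{p}$ in detail and handle $\X_{p} = \Q_{p}$ by a parallel construction at the end. Let $R: \Z_{p} \to \Z_{p}$ be the contraction $R(x) := p x$ (so $Lip(R) = p^{-1}$), and define
\begin{equation*}
f(y) := \begin{cases} y/p & \text{if } y \in p\Z_{p}, \\ 0 & \text{if } y \in \Z_{p} \setminus p\Z_{p}. \end{cases}
\end{equation*}
Because $p\Z_{p}$ and its complement are both clopen, $f$ is continuous, and $f \circ R = id$ is immediate. Iterating, any point of the form $p^{k} u$ with $u \in \Z_{p} \setminus p\Z_{p}$ reaches $u$ after $k$ steps and $0$ on the next, so $\{0\}$ is the entire set of $f$-periodic points.

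To fail shadowing, I fix $\epsilon < 1$ and any $N \ge 1$ and consider the periodic $p^{-N}$-pseudo-orbit
\begin{equation*}
1,\, 0,\, p^{N},\, p^{N-1},\, \dots,\, p,\, 1,\, 0,\, p^{N},\, \dots
\end{equation*}
whose only imprecise transition is $f(0) = 0 \mapsto p^{N}$, of error $p^{-N}$. A shadowing point $x$ would satisfy $\ldav x - 1 \rdav_{p} \le \epsilon < 1$, forcing $x \in \Z_{p} \setminus p\Z_{p}$ and therefore $f^{n}(x) = 0$ for all $n \ge 1$; this contradicts the pseudo-orbit revisiting $1$.

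For the two stability failures, I use the perturbation $\phi_{N} := p^{N} \cdot \bone_{\Z_{p} \setminus p\Z_{p}}$. Since any two points in distinct pieces of $\Z_{p} = p\Z_{p} \sqcup (\Z_{p} \setminus p\Z_{p})$ are at $p$-adic distance exactly $1$, the ultrametric gives $Lip(\phi_{N}) \le p^{-N}$ and $\ldav \phi_{N} \rdav_{\oo} = p^{-N}$, so $\phi_{N} \in Lip_{\delta}(\Z_{p})$ whenever $\delta \ge p^{-N}$. The map $g_{N} := f + \phi_{N}$ agrees with $f$ on $p\Z_{p}$ and sends every unit to $p^{N}$, creating the length-$(N+1)$ cycle
\begin{equation*}
1 \mapsto p^{N} \mapsto p^{N-1} \mapsto \cdots \mapsto p \mapsto 1.
\end{equation*}
Thus $f$ and $g_{N}$ are not topologically conjugate, as any homeomorphism between them would restrict to a bijection between periodic sets of cardinalities $1$ and $\ge N+2$; this rules out (strong) Lipschitz structural stability. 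The same cycle defeats topological stability: any continuous $h$ with $f \circ h = h \circ g_{N}$ maps $g_{N}$-periodic points into $\{0\}$, so $h(1) = 0$, forcing $\ldav h - id \rdav_{\oo} \ge \ldav h(1) - 1 \rdav_{p} = 1 > \epsilon$ whenever $\epsilon < 1$.

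For $\X_{p} = \Q_{p}$, I would replace $R$ by $R(x) = p^{2} x$ on $\Z_{p}$ and $R(x) = p x$ on $\Q_{p} \setminus \Z_{p}$: this is a continuous contraction with $Lip(R) = p^{-1}$ whose image $p^{2}\Z_{p} \cup (\Q_{p} \setminus p\Z_{p})$ misses the annulus $p\Z_{p} \setminus p^{2}\Z_{p}$, on which I set $f \equiv 0$. A parallel argument with the perturbation $p^{N} \cdot \bone_{p\Z_{p} \setminus p^{2}\Z_{p}}$ (whose Lipschitz constant the ultrametric bounds by $p^{1-N}$, since that annulus sits at $p$-adic distance $p^{-1}$ from $p^{2}\Z_{p}$) produces cycles of length $(N+1)/2$ for odd $N$ and yields all three failures. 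The main obstacle is this Lipschitz estimate: it relies on the ultrametric fact that clopen pieces of the relevant partitions are separated by a fixed positive $p$-adic distance, which keeps $Lip(\phi_{N})$ bounded by a constant multiple of $\ldav \phi_{N} \rdav_{\oo}$; once this is secured, the rest reduces to book-keeping on powers of $p$.
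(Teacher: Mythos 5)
Your construction is correct, and it takes a genuinely different and more elementary route than the paper. The paper builds its counterexample by embedding a subshift that is not of finite type into $\Z_{p}$ (via a homeomorphism $w: X \to \Z_{p}$), invoking the classical fact that a subshift is shadowing iff it is of finite type, and then deducing the failure of topological stability indirectly through Proposition \ref{Prop1_Topological Stability implies Shadowing} (topologically stable $+$ uniformly continuous $+$ nowhere locally constant $\Rightarrow$ shadowing) --- which is why the paper's $f$ is carefully arranged to be nowhere locally constant. Your map $f(y) = y/p$ on $p\Z_{p}$, $f \equiv 0$ on the units, is \emph{very} locally constant, so that route is unavailable to you; instead you defeat all three properties directly by periodic-orbit counting: $\mathrm{Per}(f)=\{0\}$, while the perturbation $\phi_{N} = p^{N}\cdot \bone_{\Z_{p}\setminus p\Z_{p}}$ (whose Lipschitz constant equals its sup norm $p^{-N}$ because the two clopen pieces are at distance $1$) creates an $(N+1)$-cycle for $g_{N}=f+\phi_{N}$, and any continuous semi-conjugacy $f\circ h = h\circ g_{N}$ must send the $g_{N}$-periodic point $1$ into $\mathrm{Per}(f)=\{0\}$, forcing $\ldav h - id\rdav_{\oo}\ge 1$. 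The non-shadowing pseudo-orbit $1,0,p^{N},\dots,p,1,\dots$ with a single error of size $p^{-N}$ is also checked correctly. What your approach buys is self-containedness --- no subshift theory and no need for the auxiliary Proposition \ref{Prop1_Topological Stability implies Shadowing} or the nowhere-locally-constant machinery; what the paper's approach buys is a counterexample exhibiting the failure even within the more rigid class of nowhere locally constant maps (your $f$ fails shadowing essentially because it collapses a clopen set to a point, which is a cheaper mechanism). The $\Q_{p}$ adaptation you sketch (image of $R$ missing the annulus $p\Z_{p}\setminus p^{2}\Z_{p}$, separation $p^{-1}$ between the annulus and its complement giving $Lip(\phi_{N})\le p^{1-N}$) also checks out, matching the paper's level of detail for that case.
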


	\paragraph{}
	
	Kawaguchi \cite{Kawaguchi_Topological_Stability_and_Shadowing_of_Zero-dimensional_Dynamical_Systems} proved that every topologically stable homeomorphism of a Cantor space is shadowing. 
	The proof of Theorem \ref{Theor1_Single Right Inverse does not suffice for shadowing} utilizes the following result, which generalizes Kawaguchi's result to the class of nowhere locally constant topologically stable maps in $\mathbb{X}_{p} \in  \left\{  \mathbb{Z}_{p}, \mathbb{Q}_{p}  \right\} $. 
	
	\begin{definition}[Nowhere Locally Constant maps]\label{Def_Nowhere Locally Stable}
		Given a continuous map $f :\mathbb{X}_{p} \to \mathbb{X}_{p}$ and $x \in \mathbb{X}_{p}$, we say that $f$ is \textit{locally constant} at the point $x$ if there exists $  \epsilon  > 0$ and $c \in \mathbb{X}_{p}$ such that for every $z \in B(x,  \epsilon )$, the condition $f(z) = c$ holds. The map $f $ is called \textit{nowhere locally constant} if it has no locally constant points.    
	\end{definition}
	
	\begin{proposition}\label{Prop1_Topological Stability implies Shadowing}
		Let $f : \mathbb{X}_{p} \to \mathbb{X}_{p}$, where $\mathbb{X}_{p} \in  \left\{   \mathbb{Z}_{p}, \mathbb{Q}_{p}  \right\} $, be a uniformly continuous map. If $f$ is topologically stable and nowhere locally constant, then $f$ has the shadowing property.     
	\end{proposition}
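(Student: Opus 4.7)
The plan is to follow the Walters--Kawaguchi template that topological stability implies shadowing, with two modifications specific to the non-invertible setting. Given $\epsilon > 0$ and a $\delta$-pseudo-orbit $\lc x_{n} \rc_{n \in \N}$ of $f$, the idea is to realise an initial segment as a genuine orbit of a small continuous perturbation $g$ of $f$; topological stability then produces a continuous $h$ close to the identity with $f \circ h = h \circ g$, and $h\lc x_{0} \rc$ is the required $\epsilon$-shadowing point. The two modifications relative to Kawaguchi's invertible case are (i) a preliminary perturbation of the $x_{n}$'s to pairwise distinct values so that the prescription for $g$ on the pseudo-orbit is single-valued, and (ii) a compactness argument passing from finite segments to the full $\N$-indexed pseudo-orbit.

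Invoke topological stability to pick $\delta_{1} \in (0, \epsilon/2]$ so that every continuous $g$ with $\ldav f - g \rdav_{\oo} \le \delta_{1}$ admits a continuous $h$ with $\ldav h - id \rdav_{\oo} \le \epsilon/2$ and $f \circ h = h \circ g$. By uniform continuity of $f$, pick $\eta > 0$ so that $\ldav x - y \rdav_{p} \le \eta$ implies $\ldav f\lc x \rc - f\lc y \rc \rdav_{p} \le \delta_{1}$, and set $\delta := \min \lfp \delta_{1}, \eta, \epsilon/2 \rfp$. Fix a $\delta$-pseudo-orbit $\lc x_{n} \rc_{n \in \N}$ and a parameter $\tau \in (0, \min \lfp \epsilon/2, \eta \rfp]$. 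For each $N \in \N$, since $\X_{p}$ is perfect, iteratively choose pairwise distinct $x'_{n} \in B\lc x_{n}, \tau \rc$ for $0 \le n \le N$; uniform continuity together with the $\delta$-pseudo-orbit property of $\lc x_{n} \rc$ gives $\ldav f\lc x'_{n} \rc - x'_{n+1} \rdav_{p} \le \delta_{1}$. Since the $x'_{n}$ are finitely many distinct points in the ultrametric space $\X_{p}$, one can choose pairwise disjoint clopen balls $B_{n} \ni x'_{n}$, each of diameter at most $\eta$, with $B_{n}$ disjoint from $\lfp x'_{m}: m \neq n \rfp$. Define
\begin{equation*}
g_{N}\lc x \rc \;:=\; f\lc x \rc + \lc x'_{n+1} - f\lc x'_{n} \rc \rc \quad \text{if } x \in B_{n},\; 0 \le n \le N-1,
\end{equation*}
and $g_{N}\lc x \rc := f\lc x \rc$ outside $\bigcup_{n=0}^{N-1} B_{n}$. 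Then $g_{N}$ is continuous (all pieces are clopen), $\ldav f - g_{N} \rdav_{\oo} \le \delta_{1}$, and $g_{N}\lc x'_{n} \rc = x'_{n+1}$ for $0 \le n \le N-1$. Apply topological stability to $g_{N}$ to obtain $h_{N}$, and set $y_{N} := h_{N}\lc x'_{0} \rc$; induction yields $f^{n}\lc y_{N} \rc = h_{N}\lc x'_{n} \rc$ for $0 \le n \le N$, whence $\ldav f^{n}\lc y_{N} \rc - x_{n} \rdav_{p} \le \epsilon/2 + \tau \le \epsilon$.

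Finally, $\lfp y_{N} \rfp_{N \in \N} \subset B\lc x_{0}, \epsilon \rc$, which is compact in $\X_{p} \in \lfp \Z_{p}, \Q_{p} \rfp$, so a subsequence $y_{N_{k}} \to y$ converges; continuity of $f^{n}$ transfers the bound to the limit, giving $\ldav f^{n}\lc y \rc - x_{n} \rdav_{p} \le \epsilon$ for every $n \in \N$, which is the desired $\epsilon$-shadowing. The main obstacle is the construction of $g_{N}$, in particular the interplay between the perturbation step and the clopen-partition step; the hypothesis that $f$ is nowhere locally constant enters here to ensure that distinct perturbed representatives $x'_{n}$ can indeed be produced for arbitrary initial segments while keeping $g_{N}$ within the $\delta_{1}$-neighborhood of $f$ and satisfying $g_{N}\lc x'_{n} \rc = x'_{n+1}$, without a locally constant region of $f$ forcing a collapse of the orbit structure.
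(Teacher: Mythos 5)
Your argument is correct and reaches the conclusion, but the key technical step is genuinely different from the paper's. The outer template is the same in both: approximate the pseudo-orbit by a proper sequence, realize a finite initial segment as a true orbit of a continuous perturbation $g$ of $f$, invoke topological stability to get a continuous $h$ near the identity with $f\circ h = h\circ g$, and pass from finite segments to the full pseudo-orbit by compactness of closed balls. The difference is how $g$ is built. The paper follows Walters--Kawaguchi: it perturbs $\lc x_{n}\rc$ to a sequence $\lc z_{n}\rc$ for which \emph{both} $\lc z_{n}\rc$ and $\lc f\lc z_{n}\rc\rc$ are proper (Lemma \ref{Lem3_Local Images of Continuous s}), then sets $g_{k}=\phi_{k}\circ f$, where $\phi_{k}$ is a homeomorphism near the identity with $\phi_{k}\lc f\lc z_{n}\rc\rc = z_{n+1}$ (Lemma \ref{Lem3_Continued Translation of Points}); the nowhere-locally-constant hypothesis is consumed precisely in separating the images $f\lc z_{n}\rc$, without which $\phi_{k}$ need not even be well defined. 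You instead add the locally constant correction $x'_{n+1}-f\lc x'_{n}\rc$ on disjoint clopen balls around the $x'_{n}$, exploiting the additive group structure and total disconnectedness of $\X_{p}$; this requires only that the $x'_{n}$ themselves be pairwise distinct, which follows from perfectness of $\X_{p}$ alone, and is completely indifferent to collisions among the $f\lc x'_{n}\rc$. Consequently your proof never actually invokes the nowhere-locally-constant hypothesis, and your closing sentence misattributes its role: producing distinct representatives $x'_{n}$ has nothing to do with $f$. As far as I can check, your argument therefore establishes the proposition under weaker hypotheses (uniform continuity plus topological stability suffice), which is stronger than the stated result and would shorten the application in Theorem \ref{Theor1_Single Right Inverse does not suffice for shadowing}; since a proof that discards a stated hypothesis always deserves a second look, verify in particular that your $g_{N}$ is an admissible perturbation under the paper's definition of topological stability (it is: it is continuous and uniformly $\delta_{1}$-close to $f$). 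Two small repairs: you also need $\tau\le\delta_{1}$, since otherwise the ultrametric estimate for $\ldav f\lc x'_{n}\rc - x'_{n+1}\rdav_{p}$ only yields $\max\lfp\delta_{1},\tau\rfp$; and the diameter bound $\eta$ on the balls $B_{n}$ is superfluous, because your correction on $B_{n}$ is the constant $x'_{n+1}-f\lc x'_{n}\rc$ rather than $x'_{n+1}-f\lc x\rc$.
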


	\paragraph{}
	
	As far as contractions in $\mathbb{X}_{p}$ are concerned, it is well-known that they satisfy the shadowing property. 
	The next result, which complements Theorem \ref{Theor1_Dilations in p-adics}, addresses the stability properties of left-invertible contractions in $\mathbb{X}_{p}$.

	\begin{theorem}\label{Theor3_Contractions in p-adics}  
		Let $R: \mathbb{X}_{p} \to \mathbb{X}_{p}$ be a contraction whose image $R \left(  \mathbb{X}_{p}  \right) $ is open and which satisfies the bi-Lipschitz inequality 
		\begin{equation}\label{Eq2_Bi-Lipschitz Contractions}
			c_{1}\cdot  \left| \left|  x -y  \right|  \right|_{p}   \le       \left| \left|  R(x) - R(y)  \right|  \right|_{p}   \le   c_{2} \cdot  \left| \left|  x -y  \right|  \right|_{p}, \quad \text{for every }  x,y \in \mathbb{X}_{p},  
		\end{equation}
		where $ 0  < c_{1} \le c_{2}  < 1 $ are positive constants.  Then, the following statements hold:
		\begin{enumerate}

			\item Assume that $\mathbb{X}_{p} = \mathbb{Z}_{p}$. Then $R$ is strongly Lipschitz structurally stable.

			\item Assume that $\mathbb{X}_{p} = \mathbb{Q}_{p}$. If $R$ is scaling, then $R$ is strongly Lipschitz structurally stable.

		\end{enumerate}
		
	\end{theorem}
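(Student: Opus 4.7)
The proof hinges on a striking rigidity phenomenon of the ultrametric. Take $\delta < c_{1}$. For every $\phi \in Lip_{\delta}\lc \X_{p} \rc$ the chain
\begin{equation*}
\ldav \phi(x) - \phi(y) \rdav_{p} \; \le \; \delta \ldav x - y \rdav_{p} \; < \; c_{1} \ldav x - y \rdav_{p} \; \le \; \ldav R(x) - R(y) \rdav_{p}
\end{equation*}
combined with the strong triangle equality forces $\ldav (R+\phi)(x) - (R+\phi)(y) \rdav_{p} = \ldav R(x) - R(y) \rdav_{p}$ for every $x \neq y$. Hence $S := R + \phi$ is a bi-Lipschitz contraction with the same constants $c_{1}, c_{2}$ as $R$, and if $R$ is scaling with function $\kk$ then so is $S$ with the same $\kk$. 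Banach's principle on the complete space $\X_{p}$ produces unique fixed points $x_{R}, x_{S}$, and the ultrametric estimate $\ldav x_{R} - x_{S} \rdav_{p} \le \max\lfp c_{2} \ldav x_{R} - x_{S} \rdav_{p}, \delta \rfp$ yields $\ldav x_{R} - x_{S} \rdav_{p} \le \delta$.

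The conjugacy $h$ is then built by splitting on whether $R$ is surjective. If $R$ is surjective, it is a homeomorphism and the operator $T(h) := S \circ h \circ R^{-1}$ is a strict $c_{2}$-contraction on the complete metric space of continuous self-maps of $\X_{p}$ having bounded sup-distance to $id$; its unique fixed point $h^{*}$ conjugates $R$ to $S$ and satisfies $\ldav h^{*} - id \rdav_{\oo} \le \delta/(1-c_{2})$, while invertibility follows by rerunning the argument with $\phi$ replaced by $-\phi$. If $R$ is not surjective, set $D := \X_{p} \setminus R\lc \X_{p} \rc$ and $D_{S} := \X_{p} \setminus S\lc \X_{p} \rc$. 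The rigidity above forces $D$ and $D_{S}$ to decompose into balls of identical radii, which I would pair via an explicit identity-close bijection $\iota : D \to D_{S}$ with $\ldav \iota - id \rdav_{\oo} \le \delta$. In the $\Z_{p}$ case, compactness delivers $\Z_{p} = \lfp x_{R} \rfp \sqcup \bigsqcup_{n \ge 0} R^{n}\lc D \rc$; in the $\Q_{p}$ scaling case, the function $\kk$ provides the same decomposition by enforcing uniform behaviour across all scales. Define $h\lc x_{R} \rc := x_{S}$, $h|_{D} := \iota$, and $h\lc R^{n}(x) \rc := S^{n}\lc \iota(x) \rc$ for $x \in D$ and $n \ge 1$. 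The bi-Lipschitz lower bound $c_{1}$ makes this well-defined (only one preimage branch), while $c_{2} < 1$ gives continuity at $x_{R}$ and the bound $\ldav h - id \rdav_{\oo} \le \e$ whenever $\delta$ is chosen small enough depending on $\e$.

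\textbf{Main obstacle.} The heart of the difficulty is Case 2 of the theorem ($\X_{p} = \Q_{p}$), and within it the non-surjective sub-case, because $\Q_{p}$ is non-compact. This is precisely where the scaling hypothesis is indispensable: it guarantees that the tree action of $R$ is uniform across all scales, so that the decomposition $\Q_{p} = \lfp x_{R} \rfp \sqcup \bigsqcup_{n} R^{n}\lc D \rc$ and the identity-close pairing $\iota$ extend coherently to arbitrarily large balls. A secondary technicality is arranging $\iota$, and hence $h$, to be a genuine homeomorphism rather than merely a continuous surjection; this relies on $R$ and $S$ sharing the same Lipschitz constants (from the rigidity step), which in turn forces $D$ and $D_{S}$ to share identical fine ball structures.
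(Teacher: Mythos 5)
Your rigidity step is correct (and in fact sharper than what the paper records: the ultrametric gives $\ldav T(x)-T(y)\rdav_p=\ldav R(x)-R(y)\rdav_p$ exactly, where $T=R+\phi$), and your graph-transform argument $h\mapsto T\circ h\circ R^{-1}$ disposes of the surjective case cleanly. The fatal problem is the non-surjective case over $\Q_p$: the decomposition $\Q_p=\lfp x_R\rfp\sqcup\bigsqcup_{n\ge0}R^n\lc D\rc$ that your definition of $h$ relies on is false. The complement of $\bigcup_n R^n\lc D\rc$ is $B_R:=\bigcap_n R^n\lc\Q_p\rc$, the set of infinitely backward-iterable points, and for a non-surjective scaling bi-Lipschitz contraction this can be much larger than the fixed point. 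Concretely, take $R\lc x\rc = p\cdot\lfp x\rfp_p+p^2\cdot\lf x\rf_p$ (digit-wise: indices $i<0$ shift to $i+1$, indices $i\ge0$ shift to $i+2$). This is scaling and bi-Lipschitz with $c_1=p^{-2}$, $c_2=p^{-1}$, and not surjective (its image is $\lfp y: a_1(y)=0\rfp$), yet every $y$ supported on digit positions $\le 0$ — for instance $y=p^{-1}$ — has preimages of all orders, so $B_R$ is infinite while $x_R=0$. Your $h$ is therefore simply undefined on $B_T\setminus\lfp x_T\rfp$. Compactness rescues you only in $\Z_p$, where $\mathrm{diam}\lc B_R\rc\le c_2\cdot\mathrm{diam}\lc B_R\rc$ forces $B_R=\lfp x_R\rfp$; in $\Q_p$ the real content of the theorem lives on $B_R$. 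The paper handles it by a genuinely different device: $R$ restricted to $B_R$ is an expansive homeomorphism, $\lc T^n(x)\rc_{n\in\Z}$ is a two-sided $\delta$-pseudo-orbit of $R$, and $h$ on $B_T$ is defined as the unique two-sided shadowing point, obtained from a Banach fixed point in the space of bounded bi-infinite sequences.

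A secondary but still genuine gap: the claim that $D$ and $D_S$ "decompose into balls of identical radii" does not follow from the rigidity alone. You need $R\lc\X_p\rc$ to be open — and, to control $h$ uniformly across scales in $\Q_p$, uniformly open ($\rho$-open for a single $\rho>0$) — which the paper proves by a separate digit-by-digit analysis and which is precisely where the scaling hypothesis is consumed. Once $\rho$-openness is available, one gets the stronger statement $T\lc\X_p\rc=R\lc\X_p\rc$ for $\delta<\min\lfp c_1/2,\rho/2\rfp$ (a fixed-point argument), so $D=D_S$ and your $\iota$ is just the identity; without openness, your $\iota$ has no construction at all. Finally, in the surjective subcase you should verify that $T$ is again surjective before inverting it (solve $R(x)=z-\phi(x)$ via the contraction $x\mapsto R^{-1}\lc z-\phi(x)\rc$, whose Lipschitz constant is $\delta/c_1<1$); this is easy but not automatic from what you wrote.
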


	Natural examples of open bi-Lipschitz scaling contractions in $\mathbb{X}_{p}$ are provided by the class of affine contractions. Specifically, these are maps $R: \mathbb{X}_{p} \to \mathbb{X}_{p}$ of the form $R(x) = vx + w$, where $v, w \in \mathbb{X}_{p}$ and $ 0 <  \left| \left|  v  \right|  \right|_{p} < 1$. 
	As a consequence, by Theorem \ref{Theor3_Contractions in p-adics}, affine contractions are strongly Lipschitz structurally stable. This result recovers \cite[Theorem 30]{Bastos-Caprio-Messaudi_Shadowing_and_Stability_in_p-adic_Dynamics}. 
	
	When $\mathbb{X}_{p} = \mathbb{Q}_{p}$, affine contractions are homeomorphisms. Non-homeomorphic examples of bi-Lipschitz scaling contractions can also be constructed in a straightforward manner. 
	For example, define the map $R$ by the formula
	$R \left(  x  \right)  = u \cdot  \left\{  x  \right\}_{p} + v\cdot  \left\lfloor  x   \right\rfloor_{p} + w $, 
	where $ u,v,w \in \mathbb{X}_{p}$ such that $ 0  <  \left| \left|  v  \right|  \right|_{p}  <  \left| \left|  u  \right|  \right|_{p}  < 1$. 
	Such maps include, for instance, the maps $R_{a}: \mathbb{Q}_{p} \to \mathbb{Q}_{p}$ defined in equation \eqref{Eq1_rho-open Contractions}.

	\paragraph{}

	In view of Theorem \ref{Theor3_Contractions in p-adics}, it is worth emphasizing that, for a general contraction $R$, the properties of being bi-Lipschitz and having an open image are not equivalent. This distinction is illustrated by the following examples. Consider the left-invertible contractions $R_{1}: \mathbb{Q}_{p}  \to \mathbb{Q}_{p}$ and $R_{2}: \mathbb{X}_{p} \to \mathbb{X}_{p}$ defined respectively by 
	$$ R_{1} \left(  x  \right)  :=   \begin{cases}
		p^{2} x, & \text{if } u_{p} \left(  x  \right)  \ge 0,  \\ 
		p  \left\{  x  \right\}  + p^{-u_{p} \left(  x  \right)  +1}  \left\lfloor  x   \right\rfloor, & \text{if } u_{p} \left(  x  \right)  \le - 1, 
	\end{cases}$$
	and 
	\begin{align*}
		R_{2} \left(  x  \right)  :&= p \left(   \left\{  x  \right\}  + \sum_{i=0}^{t_{x}} a_{i} \left(  x  \right)  p^{i} +  \left(  p-1  \right)  p^{t_{x} +1} + \sum_{i = t_{x} + 1}^{+ \infty } a_{i} \left(  x  \right)  p^{i+1}  \right)    \\ 
		&= p \left\{  x  \right\}  + p  \sum_{i=0}^{t_{x}} a_{i} \left(  x  \right)  p^{i} +  \left(  p-1  \right)  p^{t_{x} +2} + p^{2} \sum_{i = t_{x} +1}^{+ \infty } a_{i} \left(  x  \right)  p^{i},
	\end{align*} 
	where $t_{x} : = \min \left\{  i \ge 0 :   a_{i} \left(  x  \right)  = p - 1 \right\}  \in \mathbb{N} \cup  \left\{  + \infty   \right\} $. 
	The following properties of the maps $R_{1}$ and $R_{2}$ are readily verified. 
	The  image $R_{1} \left(  \mathbb{Q}_{p}  \right) $ is open, whereas $R_{1}$ is neither bi-Lipschitz nor scaling, since for all $x \in \mathbb{X}_{p}$ the digits indexed from $1$ to $\max \left\{  1, - u_{p} \left(  x  \right)   \right\} $ vanish. By contrast, the map $R_{2}$ is bi-Lipschitz but neither scaling nor has an open image. 
	Indeed, whenever $t_{x} = + \infty $, the point $R_{2} \left(  x  \right) $ is not an interior point of $R_{2} \left(  \mathbb{X}_{p}  \right) $; more precisely, $R_{2} \left(  \mathbb{X}_{p}  \right) $ contains no ball of the form $B \left(  R \left(  x  \right) , p^{k}  \right) $ for any $k \ge 1$. 
	As a complementary example, consider the left-invertible contraction $R_{3}: \mathbb{X}_{p} \to \mathbb{X}_{p}$, defined by  
	$$ R_{3} \left(  x  \right)  :=   p  \left\{  x  \right\}  + p^{2} \sum_{i=0}^{+ \infty }  \left(   a_{2i + 1} \left(  x  \right)  p^{2i} + a_{2i} \left(  x \right)  p^{2i +1 }   \right)  .$$
	The map $R_{3}$ is bi-Lipschitz and has open image, but it fails to be a scaling contraction.

	\paragraph{}
	
	The following example shows that the stability statement of Theorem \ref{Theor3_Contractions in p-adics} fails when the map $R$ is neither bi-Lipschitz nor has an open image, even if it is left-invertible and scaling.

	\begin{example} 
		Let $R: \mathbb{X}_{p} \to \mathbb{X}_{p}$ be the contraction defined by  
		$$ R \left(  x  \right)  :=   p  \left\{  x  \right\}  + p \sum_{i=0}^{+ \infty }  a_{i} \left(  x  \right)  p^{2i}.  $$ 
		It is straightforward to verify that $R$ is a scaling contraction. 
		However, it is not open, since for every point in $R \left(  \mathbb{X}_{p}  \right) $ all digits with positive even index vanish. Moreover, $R$ does not satisfy the bi-Lipschitz inequality \eqref{Eq2_Bi-Lipschitz Contractions}. 
		Finally, the map $L : \mathbb{X}_{p} \to \mathbb{X}_{p}$, given by $L \left(  x  \right)  :=  \left\{  p^{-1} x  \right\}  + \sum_{i=0}^{+ \infty } a_{2i+1} \left(  x  \right)  p^{i}$, is a left inverse of $R$. 
		
		To demonstrate that the contraction $R$ is not Lipschitz structurally stable, fix $n \in \mathbb{N} $ and consider the $\delta_{n}$-Lipschitz map $\phi_{n}: \mathbb{X}_{p} \to \mathbb{X}_{p}$, with $\delta_{n} = p^{-n-1}$, defined by $\phi_{n} \left(  x  \right)  := - a_{n} \left(  x  \right)  p^{2n+1}$. 
		Although $R$ is injective, the perturbed map $T_{n}:= R + \phi_{n}$ fails to be injective. Consequently, the maps $R$ and $T_{n}$ cannot be topologically conjugate, which completes the argument.  
		
	\end{example}

	\section{Proof of the main results}\label{Sec_Proof of the main results}

	\subsection{ Proof of Theorems \ref{Theor1_Dilations in p-adics} and Corollary \ref{Cor1_Locally Scaling Maps}}\label{Sec_Proof of Theorem Right Inverse}

	For the proof of Theorem \ref{Theor1_Dilations in p-adics}, the following lemma is employed. It shows that, if a continuous mapping $f: \mathbb{X}_{p} \to \mathbb{X}_{p}$ admits right inverses given by contractions, then for sufficiently small $\delta > 0$ and any $\phi \in Lip_{\delta} \left(  \mathbb{X}_{p}  \right) $, the analytic properties of the right inverses of $f$ are retained and inherited by the right inverses of the perturbed map $g = f + \phi$.

	\begin{lemma}\label{Lem1_Right Inverse of Noises}
		Let $f:\mathbb{X}_p \to \mathbb{X}_p$, with $\mathbb{X}_{p} \in  \left\{  \mathbb{Z}_{p}, \mathbb{Q}_{p}  \right\} $, be a continuous map that admits a contraction $R:\mathbb{X}_p \to \mathbb{X}_p$ as a right inverse, i.e., $Lip \left(  R  \right)  <1$ and $f\circ R = id $.
		Given $ \delta \in  \left(  0,  Lip \left(  R  \right) ^{-1} - 1  \right)  $ and $\phi \in       Lip_{\delta} \left(  \mathbb{X}_{p}  \right) $, define $ g :=   f + \phi $. 
		Then, there exists a contraction $\Tilde{R} : \mathbb{X}_{p} \to \mathbb{X}_{p}$          such that: 
		\begin{equation}\label{Eq1_Right Inverse}
			\text{1)}   g \circ \Tilde{R}   =   id,  \quad \text{2)}    \Tilde{R}  \left(  \mathbb{X}_{p}  \right)    =   R \left(  \mathbb{X}_{p}  \right) ,   \text{ and 3)}   Lip \left(  \Tilde{R}  \right)    \le   {Lip \left(  R  \right)   \over 1 - \delta \cdot Lip \left(  R  \right)  }  <   1.
		\end{equation}
		
	\end{lemma}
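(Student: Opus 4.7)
\textbf{Proof plan for Lemma \ref{Lem1_Right Inverse of Noises}.}

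The plan is to construct $\tilde R$ pointwise via a Banach fixed-point argument on $\X_p$. Fix $x \in \X_p$. The identity $g \circ \tilde R(x) = x$ together with the target image condition $\tilde R(\X_p) = R(\X_p)$ suggests writing $\tilde R(x) = R(y)$ for some $y \in \X_p$ and rearranging $f(R(y)) + \phi(R(y)) = x$ (using $f \circ R = id$) into the fixed-point equation
\begin{equation*}
    y \; = \; x - \phi(R(y)) \; =: \; T_x(y).
\end{equation*}
For $y_1, y_2 \in \X_p$ one has $\ldav T_x(y_1) - T_x(y_2) \rdav_p = \ldav \phi(R(y_1)) - \phi(R(y_2)) \rdav_p \le \delta \cdot Lip(R) \cdot \ldav y_1 - y_2 \rdav_p$, and since $\delta < Lip(R)^{-1} - 1 < Lip(R)^{-1}$, the constant $\delta \cdot Lip(R)$ is strictly less than $1$. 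As $\X_p$ is complete, $T_x$ admits a unique fixed point $y_x$, and I define $\tilde R(x) := R(y_x)$.

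Conclusion (1) is then immediate: $g(\tilde R(x)) = f(R(y_x)) + \phi(R(y_x)) = y_x + (x - y_x) = x$. For (2), the inclusion $\tilde R(\X_p) \subseteq R(\X_p)$ is built into the construction; for the reverse, given $z = R(y) \in R(\X_p)$, I would take $x := y + \phi(z)$ and verify directly that $y$ solves $y = x - \phi(R(y))$, so by uniqueness $y_x = y$ and $\tilde R(x) = z$. For (3), applying the standard triangle inequality (valid in any metric, including ultrametric) to
\begin{equation*}
    \ldav y_{x_1} - y_{x_2} \rdav_p \; = \; \ldav \, (x_1 - x_2) - (\phi(R(y_{x_1})) - \phi(R(y_{x_2}))) \, \rdav_p
\end{equation*}
yields $(1 - \delta \cdot Lip(R)) \ldav y_{x_1} - y_{x_2} \rdav_p \le \ldav x_1 - x_2 \rdav_p$, and composing with $R$ gives the advertised bound $Lip(\tilde R) \le Lip(R)/(1 - \delta \cdot Lip(R))$. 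The hypothesis $\delta < Lip(R)^{-1} - 1$ rearranges to $1 - \delta \cdot Lip(R) > Lip(R)$, which delivers the final strict inequality $Lip(\tilde R) < 1$.

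The only mildly delicate step is the image equality in (2): one must resist the temptation to argue via surjectivity of $g$ on $R(\X_p)$ and instead exhibit a concrete preimage for each element of $R(\X_p)$, which the fixed-point equation does automatically once the right candidate $x = y + \phi(R(y))$ is guessed. Everything else is a direct application of the contraction principle on a complete $p$-adic space together with bookkeeping of Lipschitz constants.
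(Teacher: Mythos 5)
Your proposal is correct and is essentially the paper's own proof: the paper sets $H := g \circ R = id + \phi \circ R$, shows $H$ is a homeomorphism (with surjectivity obtained from the very same fixed-point map $y \mapsto x - \phi(R(y))$ that you call $T_x$), and defines $\tilde R := R \circ H^{-1}$, which is exactly your $x \mapsto R(y_x)$. The Lipschitz bound and the image equality are likewise obtained by the same reverse-triangle-inequality and uniqueness-of-fixed-point arguments.
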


	\begin{proof} 
		Fix $\mathbb{X}_{p} \in  \left\{  \mathbb{Z}_{p}, \mathbb{Q}_{p}  \right\} $ and let $f, R, \delta, \phi$, and $g$ as described in the statement. Define the continuous map $H: \mathbb{X}_{p} \to \mathbb{X}_{p}$ by 
		\begin{align*} 
			H(x) : &=   g\circ R(x)    =   id(x) + \phi \circ R(x)   .
		\end{align*}
		Observe that $\phi\circ R$ is bounded and $\delta$-Lipschitz. To prove the relations \eqref{Eq1_Right Inverse}, it suffices to show that the map $H: \mathbb{X}_{p} \to \mathbb{X}_{p}$ is a homeomorphism. Once this is established, we define $ \Tilde{R}:=  R \circ H^{-1} $, and from  $g \circ \Tilde{R} = H \circ H^{-1} = id$, it follows that $\Tilde{R}$ is a continuous right inverse of $g$. Furthermore, since $H$ is a homeomorphism, it follows that $\Tilde{R} \left(  \mathbb{X}_{p}  \right)  = R \left(  \mathbb{X}_{p}  \right) $. 
		
		The fact that $H$ is a homeomorphism follows from standard arguments; therefore, only the main points are sketched. Injectivity is straightforward to verify. To establish surjectivity, fix $z \in \mathbb{X}_{p}$ and define $\Phi_{z}(x) = z - \phi \circ R(x)$.  Since $\Phi_{z}$ is a contraction, the existence of a fixed point yields the desired conclusion. Finally, the continuity of $H^{-1}$ follows from the fact that $H$ is an isometry. Indeed, because the mapping $\phi \circ R$ is a contraction, one obtains 
		$$  \left| \left|  H \left(  x_{1}  \right)  - H \left(  x_{2}  \right)   \right|  \right|_{p}   =    \left| \left|  x_{1} - x_{2} + \phi\circ R \left(  x_{1}  \right)  - \phi\circ R \left(  x_{2}  \right)   \right|  \right|_{p}   =    \left| \left|  x_{1} - x_{2}  \right| \right|  .$$

		\paragraph{}
		To complete the proof of the lemma, we now show that for every  $ 0 <  \delta  < Lip \left(  R  \right) ^{-1} -1  $, the map $\Tilde{R}: \mathbb{X}_{p} \to \mathbb{X}_{p}$ is a contraction with $Lip \left(  \Tilde{R}  \right) $ as described in the statement. 
		For convenience, set $ \rho :=     Lip \left(  R  \right) ^{-1} - 1  > 0 $.
		Since $R$ is a right inverse of $f$ and  $ Lip \left(  R  \right)   < 1$, for any  $x_{1}, x_{2} \in R \left(  \mathbb{X}_{p}  \right) $, we have   
		$$ \left| \left|  f \left(  x_{2}  \right)  - f \left(  x_{1}  \right)   \right|  \right|_{p} \quad \ge \quad  (1 + \rho) \cdot  \left| \left|  x_{2} - x_{1}  \right|  \right|_{p} . $$
		Now, fix $ 0  < \delta  < \rho$, and let $y_{1}, y_{2} \in \mathbb{X}_{p}$ with $\Tilde{R} \left(  y_{1}  \right)  = x_{1}$ and $\Tilde{R} \left(  y_{2}  \right)  = x_{2}$.  
		We obtain 
		\begin{align*}
			\left| \left|  y_{2} - y_{1}  \right|  \right|_{p}   &=     \left| \left|  g \left(  x_{2}  \right)  - g \left(  x_{1}  \right)   \right|  \right|_{p}   \ge     \left| \left|  f \left(  x_{2}  \right)  - f \left(  x_{1}  \right)   \right|  \right|_{p} -  \left| \left|  \phi \left(  x_{2}  \right)   - \phi \left(  x_{1}  \right)   \right|  \right|_{p}  .
		\end{align*}
		Thus, we have $ \left| \left|  y_{2} - y_{1}  \right|  \right|_{p} \ge (1 + \rho -\delta)\cdot  \left| \left|  x_{2} - x_{1}  \right|  \right|_{p} $, which, in turn, implies 
		$$  \left| \left|  \Tilde{R} \left(  y_{2}  \right)  - \Tilde{R} \left(  y_{1}  \right)   \right|  \right|_{p} \quad \le \quad  {1 \over (1 + \rho - \delta)} \cdot  \left| \left|  y_{2} - y_{1}  \right| \right| , $$
		where $  \left(   1 + \rho - \delta  \right) ^{-1} =   Lip \left(  R  \right)  \cdot  \left(  1 - \delta \cdot Lip \left(  R  \right)   \right) ^{-1}   <   1  $.
		
		This completes the proof of the lemma.

	\end{proof}

	\paragraph{}

	\begin{proof}[Proof of Theorem \ref{Theor1_Dilations in p-adics}]
		
		Throughout this proof, the set of bounded sequences in $\mathbb{X}_{p}$ is denoted by    
		\begin{equation*}
			\ell^{ \infty } \left(   \mathbb{N} ,    \mathbb{X}_{p}  \right)  : = \quad  \left\{   \left(  x_{n}   \right)_{n \in \mathbb{N} } \in \mathbb{X}_{p}^{\mathbb{N} }  : \underset{n \in \mathbb{N} }{\sup}    \left| \left|  x_{n}  \right|  \right|_{p}    <   +\infty  \right\}  
		\end{equation*} 
		Endowed with the supremum norm $ \left| \left|  \cdot  \right|  \right|_{\ell^{ \infty } \left(   \mathbb{N} ,    \mathbb{X}_{p}  \right) }$, the space $ \left(  \ell^{ \infty } \left(   \mathbb{N} ,   \mathbb{X}_{p}  \right) ,  \left| \left|  \cdot  \right|  \right|_{\ell^{ \infty } \left(   \mathbb{N} ,   \mathbb{X}_{p}  \right) }  \right) $ is complete. Notice that $\ell^{ \infty } \left(   \mathbb{N} ,   \mathbb{Z}_{p}  \right) $ is the closed unit ball of the $\mathbb{Q}_{p}$-Banach space $\ell^{ \infty } \left(   \mathbb{N} ,   \mathbb{Q}_{p}  \right) $ and, in particular, $\ell^{ \infty } \left(   \mathbb{N} ,   \mathbb{Z}_{p}  \right)  = \mathbb{Z}_{p}^{\mathbb{N} }$.

		\paragraph{} 
		
		Fix $\mathbb{X}_{p} \in  \left\{  \mathbb{Z}_{p}, \mathbb{Q}_{p}  \right\} $ and let $f: \mathbb{X}_{p} \to \mathbb{X}_{p}$ be a continuous map admitting a family of contractions $R_{i}: \mathbb{X}_{p} \to \mathbb{X}_{p}$, for $i \in \mathcal{I}$, as right inverses that satisfy  the covering condition \eqref{Eq1_Inverses Covering Property}.

		\paragraph{Shadowing:} First we demonstrate that $f$ satisfies the shadowing property, namely, that every $\delta$-pseudo-orbit of $f$ is $\delta/2$-shadowed by $f$. 
		
		\paragraph{} To this end, let $\delta  >0$ and let $ \left(  x_{n}   \right)_{n \in \mathbb{N} }$ be a $\delta$-pseudo-orbit of $f$. Define the map $F : \ell^{ \infty } \left(   \mathbb{N} ,   \mathbb{X}_{p}  \right) \rightarrow \ell^{ \infty } \left(   \mathbb{N} ,   \mathbb{X}_{p}  \right) $ by
		\begin{equation}\label{Eq1_Fixed Point Operator}
			F \left(   \left(  u_{n}   \right)_{n \in \mathbb{N} }  \right)    =    \left(  R_{i_{n}}(x_{n+1} + u_{n+1}) - x_n    \right)_{n \in \mathbb{N} } , 
		\end{equation}
		where $i_{n}  \in \mathcal{I}$ is chosen such that $R_{i_{n}} \circ f  \left(  x_{n}  \right)  = x_{n}$. The existence of such an $i_{n} \in \mathcal{I}$ is guaranteed by condition \eqref{Eq1_Inverses Covering Property}. Specifically, for each $x \in \mathbb{X}_{p}$, there exists $i = i(x) \in \mathcal{I}$ and $y = y(x) \in \mathbb{X}_{p}$ such that $R_{i}(y) = x$, which implies that $R_{i} \circ f(x) = R_{i} \circ f \circ R_{i}(y) = R_{i}(y) = x $. 
		If there are multiple possible choices for $i_{n}$, we may select $i_{n}$ arbitrarily. 
		
		The map $F$ is well defined; given a bounded sequence $ \left(  u_{n}   \right)_{n \in \mathbb{N} }$ in $\mathbb{X}_{p}$, the sequence $ \left(   R_{i_{n}} \left(  x_{n+1} + u_{n+1}  \right)  - x_{n}   \right)_{n \in \mathbb{N} }$ is also bounded. Specifically, for each $n\in \mathbb{N} $, one has also the strong inequality 	
		\begin{align*}
			\left| \left|  R_{i_{n}} \left(  x_{n+1} + u_{n+1}  \right)  - x_{n}  \right|  \right|_{p}   &=     \left| \left|  R_{i_{n}} \left(  x_{n+1} + u_{n+1}  \right)  - R_{i_{n}} \left(  f \left(  x_{n}  \right)   \right)   \right|  \right|_{p}   \\   		
			& <    \max \left(   \left| \left|  u_{n+1}  \right|  \right|_{p} , \delta   \right) ,
		\end{align*}
		where the last relation follows from the fact that $R$ is a contraction, together with the strong triangle inequality. 
		
		\paragraph{}        
		The map $F: \ell^{ \infty } \left(   \mathbb{N} ,   \mathbb{X}_{p}  \right) \to \ell^{ \infty } \left(   \mathbb{N} ,   \mathbb{X}_{p}  \right) $ is a contraction as the maps $R_{i}$, $i\in \mathcal{I}$, are contractions. Therefore, by the Banach fixed-point theorem, there exists a unique fixed point $ \left(  z_{n}   \right)_{n \in \mathbb{N} } \in \ell^{ \infty } \left(   \mathbb{N} ,   \mathbb{X}_{p}  \right) $ such that  
		\begin{equation}\label{Eq1_Fixed Point}
			F \left(   \left(  z_{n}   \right)_{n \in \mathbb{N} }  \right)     =    \left(  z_{n}   \right)_{n \in \mathbb{N} }   \text{ and }        R_{i_{n}}  \left(  x_{n+1} + z_{n+1}  \right)    =    x_{n} + z_{n},   \text{for all}   n \in  \mathbb{N} .  
		\end{equation}
		The second relation implies that $ f \left(  x_{n} + z_{n}  \right)  = x_{n+1} + z_{n+1} $, for all $n \in \mathbb{N} $, which in turns gives  
		\begin{equation}\label{Eq1_Shadowing Point}
			f^{n} \left(  x_{0} + z_{0}  \right)    =   x_{n} + z_{n}, \quad \text{for every } n \in  \mathbb{N} . 
		\end{equation}
		
		To conclude the proof of the shadowing property of $f$, it suffices to show that the $\delta$-pseudo-orbit $ \left(  x_{n}   \right)_{n \in \mathbb{N} }$ is $\delta/2$-shadowed by the orbit $ \left(  f^{n} \left(  x_{0} + z_{0}  \right)    \right)_{n \in \mathbb{N} }$. 
		From equation \eqref{Eq1_Shadowing Point}, it is sufficient to show that 
		\begin{equation}\label{Eq1_Fixed Point Norm}
			\left| \left|  z_{n}  \right|  \right|_{p}   \le   \delta/2, \quad \text{for all }  n \in  \mathbb{N} . 
		\end{equation}
		For each $n \in \mathbb{N} $, observe that 
		\begin{align*}
			\left| \left|  z_{n}  \right|  \right|_{p}   & \underset{\eqref{Eq1_Fixed Point}}{=}    \left| \left|  R_{i_{n}} \left(  x_{n+1} +z_{n+1}  \right)  - x_{n}  \right|  \right|_{p}  \\ 
			& \underset{\eqref{Eq2_p-adic Norm Triangle Inequality}}{\le}   \max  \left\{    \left| \left|  R_{i_{n}} \left(  x_{n+1} + z_{n+1}  \right)  - R_{i_{n}} \left(  x_{n+1}  \right)   \right|  \right|_{p} ,  \left| \left|  R_{i_{n}} \left(  x_{n+1}  \right)  - R_{i_{n}}  \left(  f  \left(  x_{n}  \right)   \right)   \right|  \right|_{p}  \right\}   \\ 
			& \le   \max \left\{  { \left| \left|  z_{n+1} \right|  \right|_{p} \over 2} ,   {\delta \over 2}  \right\}  .
		\end{align*}
		By iterating the last inequality, one obtains $ \left| \left|  z_{n}  \right|  \right|_{p} \le \max \left\{   \left| \left|  z_{n+k}  \right|  \right|_{p}/2^{k}, \delta/2  \right\} $, for all $k \ge 1$. Finally, inequality \eqref{Eq1_Fixed Point Norm} follows from the boundedness of the sequence $ \left(   \left| \left|  z_{n}  \right|  \right|_{p}   \right)_{n \in \mathbb{N} }$. Notice that one can also conclude inequality \eqref{Eq1_Fixed Point Norm} by taking the supremum on the two sides of the last inequality.

		\paragraph{Strong Lipschitz Structural Stability:}
		
		Let us assume, for the remainder of the proof, that the images $R_{i} \left(  \mathbb{X}_{p}  \right) $, for $i \in \mathcal{I}$, satisfy the condition given in equation \eqref{Eq1_Inverses Topological Property}; that is, they are open and pairwise disjoint.
		
		Fix a value $0  < \delta  < r -1 $, where $r$ is defined as  
		\begin{equation}\label{Eq1_Minimum of Lipschitz Constants}
			r :=   \min  \left\{  { 1 \over Lip \left(  R_{i}  \right)  } :    i \in \mathcal{I}  \right\}    \ge    p .
		\end{equation}
		Let $\phi \in Lip_{\delta} \left(  \mathbb{X}_{p}  \right) $ be a $\delta$-Lipschitz map. Define $g = f + \phi$. Since the choice of $\delta  > 0$ and $\phi \in Lip_{\delta} \left(  \mathbb{X}_{p}  \right) $ is arbitrary, to prove that $f$ is strongly Lipschitz structurally stable, it suffices to show that $f$ and $g$ are topologically conjugated by a homeomorphism $h$ which is $\delta$-close to the identity map. 
		
		To proceed, we note that, due to the choice of $  \delta  \in  \left(  0,  r-1  \right) $, Lemma \ref{Lem1_Right Inverse of Noises} guarantees that for each $i\in \mathcal{I}$, there exists a right inverse $\tilde{R}_{i}$ of $g$ (corresponding to the right inverse $R_{i}$ of $f$). 
		Specifically, the maps $\Tilde{R}_{i}$, $i \in \mathcal{I}$ are contractions, and it holds that $R_{i} \left(  \mathbb{X}_{p}  \right)  = \Tilde{R}_{i} \left(  \mathbb{X}_{p}  \right) $ for all $i \in \mathcal{I}$. Consequently, the images $\tilde{R}_{i} \left(  \mathbb{X}_{p}  \right) $ for $i \in \mathcal{I}$ satisfy the conditions of equations \eqref{Eq1_Inverses Covering Property} and \eqref{Eq1_Inverses Topological Property}.

		\paragraph{}
		
		The next part of the proof, from this point until the statement of Claim \ref{Claim 1.2}, follows similarly to the proof of the shadowing property of $f$.  
		
		Given $x,y \in \mathbb{X}_{p}$, the sequence $  \left(  g^{n} \left(  x \right)    \right)_{n \in \mathbb{N} }$ forms a $\delta$-pseudo-orbit of $f$, and the sequence $  \left(   f^{n}(y)   \right)_{n \in \mathbb{N} }$ forms a $\delta$-pseudo-orbit of $g$.
		In analogy to the map $F$ defined in \eqref{Eq1_Fixed Point Operator}, we define the maps $F_{x}, G_{y}: \ell^{ \infty } \left(   \mathbb{N} ,   \mathbb{X}_{p}  \right) \to \ell^{ \infty } \left(   \mathbb{N} ,   \mathbb{X}_{p}  \right) $ as follows: 
		\begin{equation}\label{Eq1_Theor1_Sh Operator F}
			F_{x}  \left(   \left(  u_{n}   \right)_{n \in \mathbb{N} }  \right)  := \quad  \left(  R_{i_{n}} \left(  g^{n+1}(x) + u_{n+1}  \right)  - g^{n}(x)   \right)_{n \in \mathbb{N} } 
		\end{equation}
		and
		\begin{equation}\label{Eq1_Prop1_Sh Operator G}
			G_{y} \left(   \left(  u_{n}   \right)_{n \in \mathbb{N} }  \right)  := \quad  \left(  \Tilde{R}_{j_{n}} \left(  f^{n+1}(y) + u_{n+1}  \right)  - f^{n}(y)   \right)_{n \in \mathbb{N} } ,
		\end{equation} 
		where $i_{n} = i_{n}(x) \in \mathcal{I}$ and $j_{n} = j_{n}(y) \in \mathcal{I}$ are indices satisfying the relations $R_{i_{n}} \circ f \left(  g^{n}(x)  \right)  = g^{n}(x)$ and $\Tilde{R}_{j_{n}} \circ g  \left(  f^{n}(y)  \right)  = f^{n}(y)$.
		These indices exist due to the covering condition \eqref{Eq1_Inverses Covering Property}. We denote the sequences of indices as 
		\begin{equation}\label{Eq1_Indices Operator FG}
			\mathbf{i}_{x} := \quad  \left(  i_{n}(x)   \right)_{n \in \mathbb{N} } \quad \text{and}           \quad  \mathbf{j}_{y} := \quad  \left(  j_{n}(y)   \right)_{n \in \mathbb{N} }.
		\end{equation}
		The maps $F_{x}$ and $G_{y}$ are well-defined due to the properties \eqref{Eq1_Inverses Covering Property} and \eqref{Eq1_Inverses Topological Property}.
		Moreover, these maps are contractions, and their fixed points $  \left(  z_{n}(x)   \right)_{n \in \mathbb{N} } \in \ell^{ \infty } \left(   \mathbb{N} ,   \mathbb{X}_{p}  \right) $ and $  \left(  \Tilde{z}_{n}(y)   \right)_{n \in \mathbb{N} } \in \ell^{ \infty } \left(   \mathbb{N} ,   \mathbb{X}_{p}  \right) $ are unique. 
		We define the maps $h, \tilde{h}: \mathbb{X}_{p} \to \mathbb{X}_{p}$ by the formulas 
		\begin{equation}\label{Eq1_Conjugation Lipschitz Structurally Stable FG}
			h(x)  :=   x + z_{0}(x) \quad \text{and} \quad  \Tilde{h}(y) :=   y + \tilde{z}_{0}(y) . 
		\end{equation}
		Furthermore, one obtains the estimates  
		\begin{equation}\label{Eq1_Theor1_delta Fixed Point} 
			\underset{n \in \mathbb{N} }{\sup}   \left| \left|   \left(  z_{n}(x)   \right)_{n\in \mathbb{N} }  \right|  \right|_{p} \le \delta/2 \quad \text{and} \quad \underset{n \in \mathbb{N} }{\sup}    \left| \left|   \left(  \tilde{z}_{n}(y)   \right)_{n\in \mathbb{N} }  \right|  \right|_{p} \le \delta/2  \quad \text{for all } x,y \in \mathbb{X}_{p},  
		\end{equation}
		which follow in the same way as in the case of inequality \eqref{Eq1_Fixed Point Norm}. 
		Thus, it follows that  
		$$  \left| \left|  h - id  \right|  \right|_{p}   \le   \delta/2 \quad \text{and} \quad  \left| \left|  \tilde{h} - id  \right|  \right|_{p}   \le   \delta/2 .$$

		\paragraph{}
		
		\begin{claim}\label{Claim 1.2}
			The maps $h$ and $\tilde{h}$ are homeomorphisms such that
			\begin{equation}\label{Eq1_Conjugation fg}
				f\circ h   =   h \circ g \quad \text{and} \quad  h^{-1}   =    \tilde{h} .
			\end{equation}

		\end{claim}

		\begin{proof}[Proof of Claim \ref{Claim 1.2}:]
			
			Fix $x, y \in \mathbb{X}_{p}$. 
			For each $n \in \mathbb{N} $,  the uniqueness of the fixed points of the maps $F_{g^{n}(x)}$ and $G_{f^{n}(y)}$ implies that $z_{0} \left(  g^{n}(x)  \right)  = z_{n}(x)$ and $\tilde{z}_{0} \left(  f^{n}(y)  \right)  = \tilde{z}_{n} \left(  y  \right) $. 
			Therefore, one obtains the following relations:   		
			\begin{equation}\label{Eq1_Prop1_Transeference F}
				h \left(  g^{n}(x)  \right)    \underset{\eqref{Eq1_Conjugation Lipschitz Structurally Stable FG}}{=}   g^{n}(x) + z_{n}(x)   \underset{\eqref{Eq1_Theor1_Sh Operator F}}{=}    f^{n} \left(  h(x)  \right)   \quad \text{for all } n \in \mathbb{N} 
			\end{equation}
			and
			\begin{equation}\label{Eq1_Prop1_Transeference G}
				\Tilde{h} \left(  f^{n}(y)  \right)     \underset{\eqref{Eq1_Conjugation Lipschitz Structurally Stable FG}}{=}   f^{n}(y) + \Tilde{z}_{n}(y)   \underset{\eqref{Eq1_Prop1_Sh Operator G}}{=}   g^{n} \circ \Tilde{h} (y)  \quad \text{for all } n \in \mathbb{N}  . 
			\end{equation}
			For $n=1$, these relations yield  
			\begin{equation*}\label{Eq1_Prop1_Conjugations h and tilde h}
				f \circ h   =   h \circ g \quad \text{and} \quad \tilde{h} \circ f   =   g \circ \tilde{h}. 
			\end{equation*}
			This establishes the first equality in equation \eqref{Eq1_Conjugation fg}.

			\paragraph{}
			Next, we show that $h^{-1} = \tilde{h}$. To do this, we need to demonstrate that $\tilde{h} \circ h (x) = x$ and $ h \circ \tilde{h}(y) = y$. 
			
			To prove $\tilde{h}\circ h(x)=x $, we need to show that $\Tilde{z}_{0} \left(  h(x)  \right)  = - z_{0}(x)$ since  
			$$ \Tilde{h}\circ h(x)    =     h(x) + \Tilde{z}_{0} \left(  h(x)  \right)     =    x + z_{0}(x) + \Tilde{z}_{0} \left(  h(x)  \right)      =     x .$$
			In other words, we need to show that the sequence $ \left(  - z_{n}(x)   \right)_{n \in \mathbb{N} }$ is the fixed point of the map $G_{h(x)}$. For each $n \in \mathbb{N} $, one has: 
			\begin{align*}
				\tilde{R}_{j_{n} \left(  h(x)  \right) } \left(  f^{n+1} \left(  h(x)  \right)  - z_{n+1} \left(  x  \right)   \right)    & \underset{\eqref{Eq1_Prop1_Transeference F}}{=}   \tilde{R}_{j_{n} \left(  h(x)  \right) } \left(  g^{n+1} \left(  x  \right)   \right)   \\  
				& =   g^{n}(x)   \underset{\eqref{Eq1_Prop1_Transeference F}}{=}   f^{n} \left(  h (x)  \right)  - z_{n}(x),
			\end{align*}
			which confirms that $ \left(  - z_{n}(x)   \right)_{n \in \mathbb{N} }$ is the fixed point of $G_{h(x)}$. A similar argument shows that $h \circ \tilde{h}(y) = y$.

			\paragraph{}
			Finally, we prove that $h$ and $\tilde{h}$ are continuous or, equivalently, that the maps $z_{0}$ and $\tilde{z}_{0}$ are continuous. As the argument is identical for both maps, we focus on the continuity of $z_{0}$.

			Let $x \in \mathbb{X}_{p}$ be fixed. According to the definition of the map $h$ in equation \eqref{Eq1_Conjugation Lipschitz Structurally Stable FG}, it is sufficient to show that $\underset{x' \to x}{\lim} z_{0} \left(  x'  \right)  = z_{0} \left(  x  \right) $. 
			Recall that $g^{n} \left(  x  \right)  \in R_{i_{n} \left(  x  \right) } \left(  \mathbb{X}_{p}  \right) $ for every $n\in \mathbb{N} $. Since the images $R_{i} \left(  \mathbb{X}_{p}  \right) $ are open for each $i \in \mathcal{I}$, for every $n \in \mathbb{N} $, there exists $  \epsilon_{n}  > 0$ such that $B \left(  g^{m} \left(  x  \right) ,  \epsilon_{n}  \right)  \subseteq R_{i_{m} \left(  x  \right) } \left(  \mathbb{X}_{p}  \right) $ for every $0 \le m \le n$. From the continuity of $g$, there exists $0  < \delta_{n} \le  \epsilon_{n}$ such that $ \left| \left|  g^{m} \left(  x  \right)  - g^{m} \left(  x'  \right)   \right|  \right|_{p} \le  \epsilon_{n}$ for every $0 \le m \le n$, whenever $ \left| \left|  x - x'  \right|  \right|_{p} \le \delta_{n}$. This implies that as $x' \to x$, the quantity 
			$$ n \left(  x, x'  \right)   :=   \sup \left\{  n \in  \mathbb{N} :      g^{m} \left(  x' \right)  \in R_{i_{m}(x)} \left(  \mathbb{X}_{p}  \right)    \text{ for all }  0   \le   m    \le   n  \right\}   $$
			tends to infinity. 
			
			Consequently, given $x' \in \mathbb{X}_{p}$ sufficiently close to $x$, by the definition of the map $z_{0}$, one gets   
			$$ R_{i_{m}(x)} \left(  g^{m+1} \left(  v  \right)  + z_{0}\circ g^{m+1} \left(  v  \right)   \right)    =   g^{m} \left(  v  \right)  + z_{0}\circ g^{m} \left(  v  \right)  , \quad \text{for } v \in  \left\{  x, x' \right\}   $$
			and for every $0 \le m \le n \left(  x, x'  \right) $. 
			By applying the strong triangle inequality,  one deduces that 
			\begin{multline*} 
				\left| \left|  z_{0} \circ g^{m} \left(  x  \right)  - z_{0} \circ g^{m} \left(  x'  \right)   \right|  \right|_{p}    \le   \max  \left\{  \begin{matrix}  \left| \left|  g^{m} \left(  x  \right)  - g^{m}  \left(  x'  \right)   \right|  \right|_{p}, \\   {1\over p} \cdot  \left| \left|  g^{m+1} \left(  x  \right)  - g^{m+1} \left(  x'  \right)   \right|  \right|_{p},  \\  {1\over p}\cdot  \left| \left|  z_{0} \circ g^{m+1} \left(  x  \right)  - z_{0} \circ g^{m+1} \left(  x'  \right)   \right|  \right|_{p}  \end{matrix} \right\}  
			\end{multline*}
			for every $0 \le m \le n \left(  x, x'  \right) $. By iterating this inequality for $0\le m \le k$, with $k \le n \left(  x, x' \right) $, one infers the following inequality: 
			
			\begin{equation}\label{Eq1_Claim1.4_Continuous} 
				\left| \left|  z_{0}(x) - z_{0}(x')  \right|  \right|_{p}   \le   \max  \left\{  \begin{matrix}  \left| \left|  x- x'  \right|  \right|_{p} ,  \\  
					\max \left\{   
					{1\over p^{n} }\cdot  \left| \left|  g^{n}(x) - g^{n}(x')  \right|  \right|_{p} :   1 \le n \le k+1  \right\}   ,   \\ 
					{1\over p^{k+1}}\cdot  \left| \left|  z_{0}\circ g^{k+1}(x) - z_{0}\circ g^{k+1}(x')  \right|  \right|_{p} \end{matrix}   \right\}  .
			\end{equation}

			To minimize the right-hand side of inequality \eqref{Eq1_Claim1.4_Continuous}, fix $ \eta  > 0$ and choose $x'$ sufficiently close to $x$ so that $ \left| \left|  x - x'  \right|  \right|_{p}  <  \eta $. 
			Next, define $ k :=  k \left(  x, x',  \eta  \right)  $ to be the largest integer in $ \left\{  0, \dots, n \left(  x, x' \right)   \right\} $ such that $ \left| \left|  g^{m}(x) - g^{m} \left(  x' \right)   \right|  \right|_{p} \le  \eta $ for all $0 \le m \le k +1$. 
			This results in the following upper bound:
			$$  \left| \left|  z_{0} \left(  x  \right)  - z_{0} \left(  x'  \right)   \right|  \right|_{p}   \le    \max \left\{  \eta ,   {\delta  \over p^{k+1} }  \right\}  .$$
			Since $\underset{x' \to x}{\lim} n \left(  x, x'  \right)  = + \infty $, it follows that $\underset{x' \to x}{\lim} k \left(  x, x',  \eta  \right)  = + \infty $. This conclusion follows from the continuity of $g$, as for every $k \in \mathbb{N} $, there exists $\delta_{k}  >0$ such that $ \left| \left|  g^{m} \left(  x  \right)  - g^{m} \left(  x'  \right)   \right|  \right|_{p} \le  \eta $ for every $0 \le m \le k+1$,
			Finally, letting $ \eta \to 0$, one infers that $\underset{x' \to x}{\lim} z_{0} \left(  x'  \right)  = z_{0} \left(  x  \right) $. Thus, $z_{0}$ is continuous at $x$. 
			Since the choice of $x \in \mathbb{X}_{p}$ was arbitrary, $z_{0}$ is continuous on $\mathbb{X}_{p}$ and, therefore, $h$ is continuous on $\mathbb{X}_{p}$. 
			
			The proof of Claim \ref{Claim 1.2} is complete.
			
		\end{proof}

		\paragraph{Topological Stability:}
		
		It remains to demonstrate that $f$ is topologically stable. To this end, fix $\delta  > 0$ and a map $g: \mathbb{X}_{p} \to \mathbb{X}_{p}$ such that $ \left| \left|  f - g  \right|  \right|_{ \infty } \le \delta$. Define the map $F_{x}: \ell^{ \infty } \left(   \mathbb{N} ,   \mathbb{X}_{p}  \right)  \to \ell^{ \infty } \left(   \mathbb{N} ,   \mathbb{X}_{p}  \right) $ and $h: \mathbb{X}_{p} \to \mathbb{X}_{p}$ as in relations \eqref{Eq1_Theor1_Sh Operator F} and \eqref{Eq1_Conjugation Lipschitz Structurally Stable FG}, respectively. 
		Once again, the maps $F_{x}$ and $h$ are well defined due to the conditions \eqref{Eq1_Inverses Covering Property} and \eqref{Eq1_Inverses Topological Property}. 
		Moreover, since the fixed points of the operators $F_{x}$ and $F_{g(x)}$ are unique, it follows that $f \circ h = h \circ g$. The continuity of $h$ is established in the same way as in the proof of Claim \ref{Claim 1.2}.
		
		Thus, the proof of the theorem is complete. 
		
	\end{proof}

	\paragraph{}
	
	Theorem \ref{Theor1_Dilations in p-adics} implies Corollary \ref{Cor1_Locally Scaling Maps}, upon considering the following representation of $ \left(  p^{-k}, p^{k} \right) $-locally scaling maps.

	\begin{lemma}[\cite{Furno-Natural_Extensions_for_p-adic-beta-shifts_and_other_scaling_maps}, Theorem 1.1]\label{Lem1_Locally Scaling Maps} 
		Let $f: \mathbb{Z}_{p} \to \mathbb{Z}_{p}$ be a $ \left(  p^{-k}, p^{k}  \right) $-locally scaling map with $k \ge 1$. Then, there exists a bijective isometry $w: \mathbb{Z}_{p} \to \mathbb{Z}_{p}$ such that
		\begin{equation}\label{Eq1_Locally Scaling Maps}
			f   =   \sigma^{k} \circ w ,
		\end{equation}
		where $\sigma: \mathbb{Z}_{p} \to \mathbb{Z}_{p}$ is the shift map defined in \eqref{Eq1_Shift Map Zp}.
		
	\end{lemma}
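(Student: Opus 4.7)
The plan is to construct $w$ explicitly from the $p$-adic digit expansions of $x$ and $f(x)$ and then verify the three required properties in turn: the factorization $f = S^k \circ w$, the isometric property, and bijectivity. Writing each $x \in \Z_p$ as $x = \sum_{i \ge 0} a_i(x) p^i$, I would set
\[
w(x) \; := \; \sum_{i=0}^{k-1} a_i(x) p^i \; + \; \sum_{j=0}^{+\oo} a_j\lc f(x) \rc \, p^{j+k},
\]
so that the first $k$ digits of $w(x)$ are those of $x$ while the tail digits from position $k$ onward encode $f(x)$. The factorization $S^k \circ w = f$ then holds by inspection, since $S^k$ simply erases the first $k$ digits of $w(x)$ and returns $\sum_{j \ge 0} a_j(f(x)) p^j = f(x)$.

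The isometric property is verified by splitting on the distance between $x$ and $y$. If $\ldav x - y \rdav_p > p^{-k}$, then $x$ and $y$ already disagree in some digit of index less than $k$, so $w(x)$ and $w(y)$ inherit exactly that disagreement and $\ldav w(x) - w(y) \rdav_p = \ldav x - y \rdav_p$. If instead $\ldav x - y \rdav_p \le p^{-k}$, the first $k$ digits of $w(x)$ and $w(y)$ coincide, and the $\lc p^{-k}, p^{k} \rc$-locally scaling hypothesis gives $\ldav f(x) - f(y) \rdav_p = p^k \cdot \ldav x - y \rdav_p$; the extra factor $p^{-k}$ arising from placing the $f(x)$-digits at positions $\ge k$ then once more produces $\ldav w(x) - w(y) \rdav_p = \ldav x - y \rdav_p$.

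The main obstacle is the bijectivity of $w$. Injectivity follows from the isometric property, so it reduces to surjectivity, which in turn reduces to showing that for every ball $B(u, p^{-k}) \sub \Z_p$ the restriction $f|_{B(u, p^{-k})}$ is onto $\Z_p$. My plan here is a residue-class argument. For each $n \ge 1$, partition $B(u, p^{-k})$ into its $p^n$ sub-balls of radius $p^{-(k+n)}$. Two points lying in distinct sub-balls are at some distance $p^{-(k+j)}$ with $0 \le j < n$, so by the scaling hypothesis their $f$-images are at distance $p^{-j}$ and therefore differ modulo $p^n$. The $p^n$ sub-images thus lie in $p^n$ distinct residue classes of $\Z_p$ modulo $p^n$, and since there are exactly $p^n$ such classes, every class is met by $f\lc B(u, p^{-k}) \rc$. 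Because these classes form a basis of the $p$-adic topology and $f\lc B(u, p^{-k}) \rc$ is compact (hence closed), this forces $f\lc B(u, p^{-k}) \rc = \Z_p$. The delicate point in this argument is the cardinality-matching between the $p^n$ sub-balls of $B(u, p^{-k})$ and the $p^n$ residue classes modulo $p^n$, where the exact scaling equality (rather than only an inequality) is what forces the images to be pairwise disjoint and class-separated at every level.
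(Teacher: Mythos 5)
Your proposal is correct. Note, however, that the paper does not prove this lemma at all: it is imported verbatim from Furno's paper on natural extensions for $p$-adic $\beta$-shifts and other scaling maps, so there is no in-paper argument to compare against. What you have written is a complete, self-contained, elementary substitute. The explicit formula $w(x) = \sum_{i<k} a_i(x)p^i + \sum_{j\ge 0} a_j(f(x))p^{j+k}$ makes the factorization $f = S^k\circ w$ immediate, and your two-case verification of the isometry is sound, since in $\Z_p$ the norm of a difference is governed by the first index at which the digit strings disagree. The only genuinely nontrivial point is surjectivity of $w$, which you correctly reduce to showing that $f$ maps each ball $B(u,p^{-k})$ onto $\Z_p$, and your residue-class counting handles this cleanly: the exact scaling equality forces the $p^n$ sub-balls of radius $p^{-(k+n)}$ to land in $p^n$ pairwise distinct classes modulo $p^n$, hence in all of them, and closedness of the (compact, since $f$ is continuous by the scaling identity itself) image finishes the argument. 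Two small points you leave implicit but which are easy to supply: each sub-ball maps \emph{into} a single residue class mod $p^n$ (again by the scaling equality, $|f(x)-f(y)|_p \le p^{-n}$ when $|x-y|_p\le p^{-(k+n)}$), and continuity of $f$ is needed before invoking compactness of the image. Neither is a gap worth flagging beyond a sentence in a written-up version.
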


	\begin{proof}[Proof of Corollary \ref{Cor1_Locally Scaling Maps}] 
		Fix $k \ge 1$ and let $f$ be an $ \left(  p^{-k}, p^{k}  \right) $-locally scaling map 
		decomposed as in relation \eqref{Eq1_Locally Scaling Maps}. Note that $\sigma$ admits right inverses given by the contractions $R_{i}: \mathbb{Z}_{p} \to \mathbb{Z}_{p}$ where $ R_{i}(x)= i + px$, with $i \in  \left\{  0, \dots, p-1  \right\} $. 
		Furthermore, the maps $R_{ \mathbf{a}} = w^{-1}\circ R_{a_{1}} \circ \cdots \circ R_{a_{k}}$, where $ \mathbf{a} =  \left(  a_{1}, \dots, a_{k}  \right)  \in  \left\{  0, \dots, p-1  \right\} ^{k}$, are contractions and serve as right inverses of the map $f$. The images $R_{ \mathbf{a}} \left(  \mathbb{Z}_{p}  \right) $, indexed by $ \mathbf{a}$, are open sets and partition $\mathbb{Z}_{p}$. The conclusion follows from the application of Theorem \ref{Theor1_Dilations in p-adics}.
		
		The proof is complete.

	\end{proof}

	\subsection{Proof of Theorem \ref{Theor1_Single Right Inverse does not suffice for shadowing} }

	The purpose of this section is to establish Theorem~\ref{Theor1_Single Right Inverse does not suffice for shadowing}. To this end, Proposition~\ref{Prop1_Topological Stability implies Shadowing} is invoked. A brief outline of its proof is presented first, followed by the proof of Theorem~\ref{Theor1_Single Right Inverse does not suffice for shadowing}.

	\paragraph{} 
	
	The argument used in the proof of Proposition~\ref{Prop1_Topological Stability implies Shadowing} is similar to that in Kawaguchi~\cite{Kawaguchi_Topological_Stability_and_Shadowing_of_Zero-dimensional_Dynamical_Systems}. Therefore, the reader is referred to~\cite{Kawaguchi_Topological_Stability_and_Shadowing_of_Zero-dimensional_Dynamical_Systems} for further details, where the argument can be easily adapted.

	The main idea of the argument is captured in the following two lemmas. To state them, recall that a sequence $ \left(  x_{n}   \right)_{n=0}^{+ \infty }$ (respectively, a finite sequence $ \left(  x_{n}   \right)_{n=0}^{k}$) in $\mathbb{X}_{p}$ is said to be \emph{proper} if its terms are pairwise distinct, that is, if $x_{n}  \neq x_{m}$ for all $n \neq m$ in $\mathbb{N} $ (respectively, for all $n \neq m$ in $ \left\{  0, \dots, k  \right\} $).

	\paragraph{}
	
	The first lemma asserts that for a continuous, nowhere locally constant map $f: \mathbb{X}_{p} \to \mathbb{X}_{p}$,  every $\delta$-pseudo-orbit of $f$ can be approximated by a proper $(3\delta)$-pseudo-orbit.

	\begin{lemma}\label{Lem3_Local Images of Continuous s}
		Let $f: \mathbb{X}_{p} \to \mathbb{X}_{p}$ be a continuous, nowhere locally constant map in $\mathbb{X}_{p}$, and let $\delta  > 0$ be a positive number. Given a $\delta$-pseudo-orbit $ \left(  x_{n}   \right)_{n=0}^{+ \infty }$ of $f$ and $ 0  < \rho   < \delta $, there exists a proper sequence $ \left(  z_{n}   \right)_{n=0}^{+ \infty }$ in $\mathbb{X}_{p}$ satisfying the following properties: 
		\begin{itemize}
			\item For every $n \in \mathbb{N} $ it holds that $ \left| \left|  x_{n} - z_{n}  \right|  \right|_{p}  < \rho$ .
			\item The sequence $ \left(  f \left(  z_{n}  \right)    \right)_{n=0}^{+ \infty }$ is proper, and $ \left| \left|  z_{n+1} - f \left(  z_{n}  \right)   \right|  \right|_{p}  < \max \left\{  \delta, \rho  \right\}  $, for all $n \in \mathbb{N} $.
		\end{itemize} 
		
	\end{lemma}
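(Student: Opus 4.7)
The plan is to build $(z_n)$ inductively, using the nowhere locally constant hypothesis to acquire enough freedom at each stage to avoid finitely many forbidden values, while using continuity of $f$ to make sure the perturbation does not damage the pseudo-orbit bound. The single fact that drives the induction is the following: \emph{for every open ball $B \subset \X_p$, the image $f(B)$ is infinite.} To see this, suppose $f(B) = \{c_{1}, \dots, c_{k}\}$ were finite. Since open balls in $\X_p$ are clopen, $B$ is itself a complete metric space, and $B = \bigcup_{i=1}^{k}\bigl(f^{-1}(c_{i}) \cap B\bigr)$ is a finite union of closed subsets of $B$. By the Baire category theorem, at least one of these closed sets has nonempty interior in $B$, hence contains an open ball of $\X_{p}$ on which $f$ is constant, contradicting the nowhere locally constant hypothesis.

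Armed with this, I would run the induction as follows. Having chosen $z_{0}, \dots, z_{n-1}$ satisfying $\|x_{m} - z_{m}\|_p < \rho$ with $(z_{m})_{m < n}$ and $(f(z_{m}))_{m < n}$ both injective, use continuity of $f$ at $x_{n}$ to pick $0 < \rho_{n} \le \rho$ with $f\bigl(B(x_{n}, \rho_{n})\bigr) \subseteq B(f(x_{n}), \delta)$. The key fact ensures $f\bigl(B(x_{n}, \rho_{n})\bigr)$ is infinite, so there exists a value
\[
y \;\in\; f\bigl(B(x_{n}, \rho_{n})\bigr) \setminus \{f(z_{0}), \dots, f(z_{n-1})\}.
\]
Choose any $z_{n} \in B(x_{n}, \rho_{n}) \cap f^{-1}(y)$. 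Then $f(z_{n}) = y \neq f(z_{m})$ for $m < n$, and automatically $z_{n} \neq z_{m}$ for $m < n$ (else the images would coincide), so both $(z_m)_{m \le n}$ and $(f(z_m))_{m \le n}$ remain proper.

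The verification of the two bulleted properties is then immediate. The bound $\|x_{n} - z_{n}\|_p < \rho_{n} \le \rho$ is built into the construction. Injectivity of $(f(z_{n}))$ is also built in. For the pseudo-orbit estimate, the ultrametric inequality gives
\[
\|z_{n+1} - f(z_{n})\|_p \;\le\; \max\Bigl\{ \|z_{n+1} - x_{n+1}\|_p,\; \|x_{n+1} - f(x_{n})\|_p,\; \|f(x_{n}) - f(z_{n})\|_p \Bigr\} \;<\; \delta,
\]
using the pseudo-orbit property for the middle term and the choice of $\rho_{n}$ for the last, and this is comfortably below the stated $\delta + 2\rho$.

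The main obstacle is the initial Baire-category step: once one knows that $f$ cannot squeeze an open ball onto a finite set, the rest of the argument becomes essentially forced, since each inductive step reduces to selecting a value outside a finite set from an infinite image, a task for which no further structure of $\X_p$ is needed beyond being a complete ultrametric space. A minor bookkeeping point is that $\rho_n$ must be taken no larger than $\rho$, which is harmless.
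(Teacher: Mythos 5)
Your proof is correct and follows essentially the same recursive construction as the paper: at each step one perturbs $x_{n}$ within a small ball, using the fact that $f$ maps every ball onto an infinite set to dodge the finitely many previously used values of $f(z_{m})$. The only differences are cosmetic improvements — you actually justify the infinitude of $f(B)$ via a Baire-category argument (the paper merely asserts it), and your use of the ultrametric inequality yields the sharper bound $\ldav z_{n+1}-f(z_{n})\rdav_{p}\le\delta$ in place of the paper's $\delta+2\rho$ from the ordinary triangle inequality.
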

	
	Lemma \ref{Lem3_Local Images of Continuous s} can be traced back to Walters \cite[Lemma 9]{Walters_On_the_Pseudo_Orbit_Tracing_Property_and_its_relationship_to_Stability} and Kawaguchi \cite[Proof of Lemma 1.1]{Kawaguchi_Topological_Stability_and_Shadowing_of_Zero-dimensional_Dynamical_Systems}, 
	although in those works, the authors focus specifically on the case of homeomorphisms in compact spaces.

	\begin{proof}
		
		Let $f: \mathbb{X}_{p} \to \mathbb{X}_{p}$ be a continuous, nowhere locally constant continuous map. 
		Then, for each $x \in \mathbb{X}_{p}$ and for all $\delta  > 0$, the following holds: 
		\begin{equation}\label{Eq_Local Images of Continous s}    
			\# f \left(  B \left(  x, \delta  \right)   \right)     =   + \infty  .
		\end{equation}

		\paragraph{}
		The sequence $ \left(  z_{i}   \right)_{i=0}^{+ \infty }$ is constructed recursively.
		Begin by selecting $z_{0} \in \mathbb{X}_{p}$ such that $  \left| \left|  x_{0} - z_{0}  \right|  \right|_{p}  < \rho$ and $  \left| \left|  f \left(  x_{0} \right)  - f \left(  z_{0}  \right)   \right|  \right|_{p}  < \rho$.  
		Assume that for some $k \in \mathbb{N} $, we have already chosen the points $z_{0}, z_{1}, \dots, z_{k} \in \mathbb{X}_{p}$ such that the tuples $  \left(  z_{n}   \right)_{n=0}^{k}$ and $ \left(  f \left(  z_{n}  \right)    \right)_{n=0}^{k}$ are proper, and for each $n \in  \left\{  0, \dots, k  \right\} $ it holds that 
		$$  \left| \left|  x_{n} - z_{n}  \right|  \right|_{p}   <   \rho \quad \text{and} \quad   \left| \left|  f \left(  x_{n}  \right)  - f \left(  z_{n}  \right)   \right|  \right|_{p}    <   \rho .$$
		Since $f$ is nowhere locally constant, for any $  \eta  > 0$, the set $f \left(  B \left(  x_{k+1},  \eta  \right)   \right) $ contains infinitely many points. Thus, one can always choose a point $z_{k+1} \in \mathbb{X}_{p}$ such that:
		$$ \left| \left|  x_{k+1} - z_{k+1}  \right|  \right|_{p}  < \rho,    \left| \left|  f \left(  x_{k+1}  \right)  - f \left(  z_{k+1}  \right)   \right|  \right|_{p} < \rho,   \text{and}   f \left(  z_{k+1}  \right)  \not\in  \left\{  f \left(  z_{0}  \right) , \dots, f \left(  z_{k}  \right)   \right\} . $$ 
		By repeating this process, we construct the proper sequence $ \left(  z_{n}   \right)_{n=0}^{+ \infty }$. 
		
		Finally, to ensure that $ \left| \left|  z_{n+1} - f \left(  z_{n}  \right)   \right|  \right|_{p} \le \max \left\{  \delta, \rho  \right\} $ for all $n \in \mathbb{N} $, we apply the triangle inequality:
		\begin{align*}
			\left| \left|  z_{n+1} - f \left(  z_{n}  \right)   \right|  \right|_{p}    &\le    \max \left\{   \left| \left|  z_{n+1} - x_{n+1}  \right|  \right|_{p},    \left| \left|  x_{n+1} - f \left(  x_{n}  \right)   \right|  \right|_{p},    \left| \left|  f \left(  x_{n}  \right)  - f \left(  z_{n}  \right)   \right|  \right|_{p}  \right\}   \\ 
			& <  \max \left\{  \delta, \rho  \right\}  .
		\end{align*}
		From the construction, one has that $ \left| \left|  z_{n+1} - x_{n+1}  \right|  \right|_{p}  < \rho$ and $ \left| \left|  f \left(  x_{n}  \right)  - f \left(  z_{n}  \right)   \right|  \right|_{p} < \rho$, so: $  \left| \left|  z_{n+1} - f \left(  z_{n}  \right)   \right|  \right|_{p} <  \max \left\{  \delta, \rho  \right\}  $. Thus, the sequence $ \left(  z_{n}   \right)_{n=0}^{+ \infty }$ satisfies the required properties. 
		
		The proof of the lemma is complete.

	\end{proof}

	\paragraph{}

	The second lemma asserts that for any two finite sequences of the same length in $\mathbb{X}_{p}$ that are close to each other, there exists a homeomorphism that maps the first sequence to the second. Notably, the closer the two sequences are to each other, the closer the homeomorphism is to the identity map. 
	This property is known as \textit{generalized homogeneity}, and the proof of the lemma follows the standard method for constructing homeomorphisms in contexts like this, particularly in Cantor spaces (see, for instance, \cite[Lemma 1.1]{Kawaguchi_Topological_Stability_and_Shadowing_of_Zero-dimensional_Dynamical_Systems}).

	\begin{lemma}\label{Lem3_Continued Translation of Points}
		Given $ \delta  > 0$ and $k \in \mathbb{N} $, let $ \mathbf{z} =  \left(  z_{n}   \right)_{n=0}^{k}$ and $ \mathbf{y} =  \left(  y_{n}   \right)_{n=0}^{k}$ be two proper sequences in $\mathbb{X}_{p}$ such that $ \left| \left|  z_{n} - y_{n}  \right|  \right|_{p}  < \delta$ for every $0 \le n \le k $. Then, there exists a homeomorphism $\phi: \mathbb{X}_{p} \to \mathbb{X}_{p}$ such that 
		$$ \left| \left|  \phi - id  \right|  \right|_{ \infty }   <    3 \delta \quad \text{and} \quad \phi \left(  y_{n}  \right)    =   z_{n},    \text{ for every } 0   \le   n   \le   k .$$
		
	\end{lemma}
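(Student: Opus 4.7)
The plan is to construct $\phi$ as a finitely supported piecewise-translation homeomorphism, exploiting the ultrametric structure of $\X_{p}$: every ball is clopen, and translations $x \mapsto x + c$ preserve balls, i.e.\ $B\lc y, r \rc + c = B\lc y + c, r \rc$. Outside a finite union of small clopen balls, $\phi$ will be the identity. Since the finite families $\lc y_n \rc_{n=0}^{k}$ and $\lc z_n \rc_{n=0}^{k}$ are proper, I first choose a radius $r \in \lc 0, \delta \rc$ strictly less than every positive pairwise distance within each family. This guarantees that the balls $B\lc y_n, r \rc$, $n = 0, \dots, k$, are pairwise disjoint, and similarly for $B\lc z_n, r \rc$; nothing, however, prevents a $B\lc y_n, r \rc$ from coinciding with some $B\lc z_m, r \rc$.

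I then encode the task combinatorially by a finite directed graph $G$ whose vertices are the distinct balls among $\lfp B\lc y_n, r \rc, B\lc z_n, r \rc \rfp_{n=0}^k$, with a directed edge $B\lc y_n, r \rc \to B\lc z_n, r \rc$ for each $n$. By the disjointness above, each vertex has in-degree and out-degree at most one, so the (weakly) connected components of $G$ are directed cycles or directed simple paths. Now I build $\phi$ component by component. On the complement of the vertex set of $G$ I set $\phi = id$. On a directed cycle $B_1 \to B_2 \to \cdots \to B_L \to B_1$ with $j$-th edge labeled $n_j$, I let $\phi|_{B_j}\lc x \rc := x + \lc z_{n_j} - y_{n_j} \rc$; by the ultrametric translation property this maps $B_j$ isometrically onto $B_{j+1}$ and sends $y_{n_j}$ to $z_{n_j}$. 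On a directed path $B_1 \to B_2 \to \cdots \to B_L$ with labels $n_1, \dots, n_{L-1}$, I use the same formula for $1 \le j \le L-1$ and close the permutation by setting $\phi|_{B_L}$ to be a translation sending $B_L$ onto $B_1$, for instance $x \mapsto x + \lc y_{n_1} - z_{n_{L-1}} \rc$.

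Bijectivity of $\phi$ follows component-by-component; continuity of both $\phi$ and $\phi^{-1}$ is immediate since each restriction is an isometry on a clopen ball of radius $r$. By construction $\phi\lc y_n \rc = z_n$ for every $n$. For the sup-norm bound, every link translation has $p$-adic norm $\ldav z_{n_j} - y_{n_j} \rdav_{p} < \delta$, and the closing translation in a path has norm at most the diameter of its connected component; iterating the ultrametric inequality and using $r < \delta$, that diameter is at most $\delta$. Hence $\ldav \phi - id \rdav_{\oo} < \delta < 3\delta$.

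The main difficulty lies entirely in the combinatorial bookkeeping for the graph $G$, since coincidences of the form $B\lc y_n, r \rc = B\lc z_m, r \rc$ with $n \neq m$ cannot be removed by shrinking $r$; the cycle/path decomposition resolves them naturally because the in/out-degree-at-most-one structure precludes any more complicated configuration. The ultrametric triangle inequality is essential for bounding the closing translation in each path: in a general metric space the bound would deteriorate with the path length, which is precisely why the lemma is expected to live in a zero-dimensional setting such as $\X_{p}$.
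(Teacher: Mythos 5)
Your construction is correct, and since the paper does not actually write out a proof of this lemma (it defers to the ``standard method'' for Cantor spaces, citing Kawaguchi's Lemma 1.1), your argument stands as a complete, self-contained implementation of that idea. What you do differently is exploit the group and ultrametric structure of $\X_{p}$: because two balls of a common radius $r$ are automatically equal or disjoint, and because translations map $B\lc y, r \rc$ isometrically onto $B\lc y + c, r \rc$, you can realize $\phi$ as a piecewise translation on a finite clopen partition rather than gluing arbitrary homeomorphisms between small clopen sets as in the generic Cantor-space argument. The combinatorial core is sound: properness of each family and your choice of $r$ force every vertex of $G$ to have in- and out-degree at most one, which is exactly what rules out two outgoing (or two incoming) edges at a vertex and yields the cycle/path decomposition; the only translation not directly of the form $z_{n} - y_{n}$, namely the one closing a path, has norm bounded by the ultrametric maximum over the chain of the quantities $\ldav z_{n_{j}} - y_{n_{j}} \rdav_{p} < \delta$ and $\ldav z_{n_{j-1}} - y_{n_{j}} \rdav_{p} \le r < \delta$, hence is still $< \delta$. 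As a bonus your approach gives the sharper estimate $\ldav \phi - id \rdav_{\oo} < \delta$; the factor $3$ in the statement is only needed when one estimates displacements with the ordinary (non-ultrametric) triangle inequality, as in Walters' and Kawaguchi's setting. The trade-off is that your proof is specific to ultrametric abelian groups, whereas the cited argument works on any Cantor space; for the purposes of this paper that specialization costs nothing.
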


	\paragraph{}
	
	\begin{proof}[Sketch of the Proof (Proposition \ref{Prop1_Topological Stability implies Shadowing})]
		
		Fix $\mathbb{X}_{p} \in  \left\{  \mathbb{Z}_{p}, \mathbb{Q}_{p}  \right\} $, and suppose that $f$ is topologically stable, uniformly continuous, and nowhere locally constant. 
		Given $\delta  > 0$, let $\epsilon =  \epsilon  \left(  \delta  \right)   >0$ be such that for every $g \in C \left(  \mathbb{X}_{p}  \right) $ with $ \left| \left|  f - g  \right|  \right|_{ \infty } \le 3\delta$, there exists a continuous map $h \in C \left(  \mathbb{X}_{p}  \right) $ such that $f \circ h = h \circ g$ and $ \left| \left|  h - id  \right| \right|  \le  \epsilon / 2$. 
		By the uniform continuity of $f$, there exists $ 0  < \rho  < \min \left\{  \delta /3 ,  \epsilon / 3  \right\} $ such that if $ \left| \left|  u -v  \right|  \right|_{p}  < \rho$, then $ \left| \left|  f(u) - f(v)  \right|  \right|_{p} \:< \delta /3 $ for every $u, v \in \mathbb{X}_{p}$.

		To prove that $f$ $ \epsilon $-shadows a  $ \left(  \delta / 3  \right) $-pseudo-orbit $ \left(  x_{n}   \right)_{n=0}^{+ \infty }$, it suffices to show that $f$ $ \epsilon $-shadows the proper sequence $  \left(  z_{n}   \right)_{n=0}^{+ \infty }$ obtained by Lemma \ref{Lem3_Local Images of Continuous s}, which is $\rho$-close to $ \left(  x_{n}   \right)_{n =0}^{+ \infty }$. 
		By the compactness of the ball $B \left(  z_{0},  \epsilon  \right) $, a standard compactness argument guarantees that it suffices to prove that $f$ $ \epsilon $-shadows every finite subsequence $ \left(  z_{n}   \right)_{n=0}^{k}$, for $k \in \mathbb{N} $. 
		
		To this end, Lemma \ref{Lem3_Continued Translation of Points} yields a homeomorphism $\phi_{k}: \mathbb{X}_{p} \to \mathbb{X}_{p}$ such that $  \left| \left|  \phi_{k} - id  \right|  \right|_{ \infty }  <  3 \delta$ and $\phi_{k} \left(  f \left(  z_{n}  \right)   \right)  = z_{n+1}$ for all $0 \le n \le k + 1$.
		Then, define the continuous map $g_{k} := \phi_{k} \circ f $, which satisfies $g_{k}^{n} \left(  z_{0}  \right)  = z_{n}$, for all $0 \le n \le k+1$, and $ \left| \left|  g_{k} - f  \right|  \right|_{ \infty } \le 3 \delta$. 
		Since $f$ is topologically stable, there exists a homeomorphism $h_{k}: \mathbb{X}_{p} \to \mathbb{X}_{p}$ such that $h_{k} \circ g_{k} = f \circ h_{k}$. 
		The proof concludes upon verifying that the finite orbit $  \left(  f^{n} \left(  h_{k} \left(  z_{0}  \right)   \right)    \right)_{n =0}^{k}$ $ \epsilon $-shadows the finite $(\delta/3)$-pseudo-orbit $ \left(  z_{n}   \right)_{n=0}^{k}$. 
		
	\end{proof}

	\paragraph{}

	\begin{proof}[Proof of Theorem \ref{Theor1_Single Right Inverse does not suffice for shadowing}]
		The proof is carried out in two steps. First, an example of a map
		$f \colon \mathbb{Z}_{p} \to \mathbb{Z}_{p}$ satisfying the conclusions of the theorem is constructed.
		Next, this example is extended to the case $\mathbb{X}_{p} = \mathbb{Q}_{p}$ by suitably
		expanding the map $f$ from $\mathbb{Z}_{p}$ to $\mathbb{Q}_{p}$.

		\paragraph{}
		
		Fix a prime number $p \ge 2$. Without loss of generality, assume that $p \ge 3$; the argument for the case $p=2$ can be adapted in the same way. 
		In what follows, the space of infinite words $ \left\{  0, \dots, p-1  \right\} ^{\mathbb{N} }$ is identified with $\mathbb{Z}_{p}$ via the map $P:  \left\{  0, \dots, p-1  \right\} ^{\mathbb{N} } \to \mathbb{Z}_{p}$, $P \left(   \left(  a_{i}   \right)_{i=0}^{+ \infty }  \right)  = \sum_{i=0}^{+ \infty } a_{i} p^{i}$. The space $ \left\{  0, \dots, p-1  \right\} ^{\mathbb{N} }$ is endowed with the metric 
		$$d_{p} \left(   \left(  a_{i}   \right)_{i=0}^{+ \infty },  \left(  b_{i}   \right)_{i=0}^{+ \infty }  \right)    =     \left| \left|  P \left(   \left(  a_{i}   \right)_{i=0}^{+ \infty }  \right)  - P \left(   \left(  b_{i}   \right)_{i=0}^{+ \infty }  \right)   \right|  \right|_{p} . $$
		Note that the topology of $ \left\{  0, \dots, p-1  \right\} ^{\mathbb{N} }$ induced by $d_{p} \left(  \cdot, \cdot  \right) $ on $ \left\{  0, 1, \dots, p-1  \right\} ^{\mathbb{N} }$ coincides with the product topology, where the alphabet $ \left\{  0, \dots, p-1  \right\} $ is equipped with the discrete topology.

		It is known that there exists a nonempty class of continuous, non-shadowing maps in $ \left\{  0, \dots, p-1  \right\} ^{\mathbb{N} }$. 
		Recall that a closed subset $ X \subseteq  \left\{  0, \dots, p-1  \right\} ^{\mathbb{N} }$ is called a subshift if it is invariant under the shift map $ Z:  \left\{  0, \dots, p-1  \right\} ^{\mathbb{N} } \to  \left\{  0, \dots, p-1  \right\} ^{ \mathbb{N} }$, i.e., $Z \left(  X  \right)  \subseteq X$. 
		It is known that the restriction of the shift map $Z: X \to X$ on a subshift $X$ is shadowing if and only if the system $ \left(  X, Z  \right) $ is of finite type (see \cite[Chapter 15, Section 4]{Aoki-Topological_Dynamics} and \cite[Theorem 1]{Walters_On_the_Pseudo_Orbit_Tracing_Property_and_its_relationship_to_Stability}). 
		Let us now fix a subshift $ \left(  X, Z  \right) $ that is not of finite type, and hence not shadowing. Since $X$ is a Cantor space, 
		there exists a homeomorphism $w: X \to \mathbb{Z}_{p}$. 
		Define the map $\zeta: \mathbb{Z}_{p} \to \mathbb{Z}_{p}$ by 
		\begin{equation}\label{Eq_Theor1_SRI Non Shadowing Map}   
			\zeta  = w \circ Z \circ w^{-1}.
		\end{equation} 
		Since $Z$ is not shadowing, it follows that $\zeta$ is also not shadowing. 
		
		Next, define the map $f: \mathbb{Z}_{p} \to \mathbb{Z}_{p}$ as follows. For $x \in \mathbb{Z}_{p}$, decompose $x$ as $x = a_{0} \left(  x  \right)  + a_{1} \left(  x  \right)  p + z p^{2}$, where $ a_{0} \left(  x  \right) , a_{1} \left(  x  \right)  \in  \left\{  0, \dots, p-1  \right\} $ and $z =  \sum_{i=0}^{+ \infty } a_{i+2} \left(  x  \right)  p^{i}$. Then set 
		$$ f(x)  := \quad \begin{cases}
			x , & \text{if } a_{0} \left(  x  \right)  = 0   \\ 
			z, & \text{if } a_{0} \left(  x  \right)  \neq 0 \text{ and } a_{1} \left(  x  \right)  = 0   \\ 
			a_{0} \left(  x  \right)  +  \left(  a_{1} \left(  x  \right)  + 1  \right)  p +  \zeta(z) p^{2} , & \text{if } a_{0} \left(  x  \right)   \neq 0 \text{ and } 1\le a_{1} \left(  x  \right)  \le p-2   \\ 
			a_{0} \left(  x  \right)  + p +  \zeta(z) p^{2}, & \text{if } a_{0} \left(  x  \right)  \neq 0 \text{ and } a_{1} \left(  x  \right)  = p-1 ,
		\end{cases}      
		$$
		where the map $\zeta$ is defined in \eqref{Eq_Theor1_SRI Non Shadowing Map}. 
		Since the decomposition $x = a_{0} \left(  x  \right)  + a_{1} \left(  x  \right)  p +  zp^{2} $ is unique, the map $f$ is well-defined. Notice that the map $f$ is continuous and admits the contractions $R_{a}(x) = a +  p^{2} x$ as right inverses for all $a \in  \left\{   1, \dots, p-1  \right\} $. 
		To show that $f$ is not shadowing, it suffices to observe that any $\delta$-pseudo-orbit $ \left(  x_{n}   \right)_{n \in  \mathbb{N} }$ of $\zeta$ is $ \epsilon $-shadowed by $\zeta$ if and only if the $ \left(  \delta p^{-2}  \right) $-pseudo-orbit $ \left\{  a + b_{n}p + x_{n} p^{2}  \right\}_{n=0}^{+ \infty }$ of $f$, where $b_{n} = u +1$ when $n = m \cdot (p-1) + u$ for $u \in  \left\{  0,1, \dots, p-2  \right\} $, is $ \left(   \epsilon p^{-2}  \right) $-shadowed by $f$ for any choice of $a \neq 0$. 
		Since $\zeta$ is not shadowing, this implies that $f$ is also not shadowing. 
		
		Now, we prove that $f$ is not topologically stable. Since $f$ is not shadowing, by Proposition \ref{Prop1_Topological Stability implies Shadowing}, it suffices to show that $f$ is nowhere locally constant, i.e., that $f$ has no locally constant points. 
		It is clear that if $ a_{0} \left(  x  \right)  = 0$ or $a_{1} \left(  x  \right) = 0$, then $a_{0} \left(  x  \right)   + a_{1} \left(  x  \right)  p + z p^{2}$ is not a locally constant point of $f$. 
		For the remaining cases, fix $x= a_{0} \left(  x  \right)  +  a_{1} \left(  x  \right)  p + z p^{2}$ with $a_{0} \left(  x  \right)  \neq 0$ and $ a_{1} \left(  x  \right)  \neq 0$. Then, for every $y$ sufficiently close to $x$, with $x \neq y$, it holds that $0  <  d_{p} \left(  w^{-1} \left(  x  \right) ,  w^{-1} \left(  y  \right)   \right)  \le p^{-2}$. 
		Since $w: X \to \mathbb{Z}_{p}$ is a homeomorphism and $Z$ is the shift map, it follows that $Z\circ w^{-1}(x) \neq Z \circ w^{-1}(y)$, which implies that $\zeta \left(  x  \right)  \neq \zeta \left(  y  \right) $. Thus $x$ is not a locally constant point of $f$.  
		This establishes that $f$ is nowhere locally constant. 
		
		Finally, we prove that $f$ is not Lipschitz structurally stable. Fix $\delta  < 1/p$ and a $\delta$-Lipschitz map $\phi \in Lip_{\delta} \left(  \mathbb{Z}_{p}  \right) $ such that $\phi \left(  x  \right)  \neq 0$ when $  \left| \left|  x  \right|  \right|_{p} \le p^{-1}$ and $\phi(x) = 0 $ otherwise. 
		Define the map $g = f + \phi$. We show that $f$ is not topologically conjugate to $g$. 
		Assume, for contradiction, that a homeomorphism $h: \mathbb{Z}_{p} \to \mathbb{Z}_{p}$ exists such that $f \circ h = h \circ g$. 
		Then, for every $x \in \mathbb{Z}_{p}$ with $ \left| \left|  h \left(  x  \right)   \right|  \right|_{p} \le p^{-1}$, one has $h(x) = h \left(  f(x) + \phi \left(  x  \right)   \right) $. Since $h$ is a homeomorphism, there exists an open set $V \subseteq \mathbb{Z}_{p}$ such that $x  = f(x) + \phi(x)$, for all $x \in V$. This conclusion, however, contradicts the construction of $f$ and the choice of the $\delta$-Lipschitz map $\phi$. Consequently, the map $f$ fails to be Lipschitz structurally stable.

		\paragraph{} 
		
		In the case $\mathbb{X}_{p} = \mathbb{Q}_{p}$, define the map $\tilde{f}: \mathbb{Q}_{p} \to \mathbb{Q}_{p}$ by 
		$$ \tilde{f} \left(  x  \right)  :=   \begin{cases}
			f \left(  x  \right) , & \text{if } u_{p} \left(  x  \right)  \ge 0 ,  \\  
			p^{u_{p} \left(  x  \right) } f \left(  p^{-u_{p} \left(  x  \right) } x  \right) ,  & \text{if } u_{p} \left(  x  \right)  \le - 1 .  
		\end{cases}  $$
		Observe that $\tilde{f}$ admits, as right inverses, the contractions 
		$$ \tilde{R}_{a} \left(  x  \right)    =    \begin{cases} 
			R_{a} \left(  x  \right)  , & \text{if } u_{p} \left(  x  \right)  \ge 0,  \\  
			a  p^{u_{p} \left(  x  \right)  } + p^{2} x, & \text{if } u_{p} \left(  x  \right)  \le - 1, 
		\end{cases}
		$$
		for all $a \in  \left\{  1, \dots, p-1  \right\} $. Indeed, if $ x \in \mathbb{Z}_{p}$, the claim follows immediately from the definition. If $x \in \mathbb{Q}_{p} \backslash \mathbb{Z}_{p}$, a straightforward computation gives 
		\begin{align*}
			\tilde{f} \circ \tilde{R}_{a}  \left(  x  \right)    =    p^{u_{p} \left(  x  \right) } f \left(  a + p^{2 - u_{p} \left(  x  \right) } x  \right)    =   p^{u_{p} \left(  x  \right) } f \circ R_{a} \left(  p^{- u_{p} \left(  x \right) } x  \right)     =    x. 
		\end{align*}
		Arguing analogously to the case $\mathbb{X}_{p} = \mathbb{Z}_{p}$, it follows that the map $\tilde{f}$
		is neither shadowing, nor topologically stable, nor Lipschitz structurally stable.
		
		The proof is complete.
	\end{proof}

	\subsection{Proof of Theorem \ref{Theor3_Contractions in p-adics}}\label{Sec_Proof of Theorem Left Inverse}
	
	Throughout this section, the image $R \left(  \mathbb{X}_{p}  \right) $ is said to be \textit{$\rho$-open} for some $\rho  > 0$ if, for every $x \in R \left(  \mathbb{X}_{p}  \right) $, the ball $B \left(  x, \rho  \right) $ is contained in $R \left(  \mathbb{X}_{p}  \right) $, that is, $B \left(  x, \rho  \right)  \subseteq R \left(  \mathbb{X}_{p}  \right) $.

	\paragraph{}
	
	The proof of Theorem~\ref{Theor3_Contractions in p-adics} also relies on the following three lemmas. The first shows that a bi-Lipschitz contraction with open image is an open mapping; moreover, if $ R $ is scaling, then its image is $\rho$-open.

	\begin{lemma}\label{Lem2_Open bi-Lipschitz Contractions}
		Let $R: \mathbb{X}_{p} \to \mathbb{X}_{p}$, with $\mathbb{X}_{p} \in \{\mathbb{Z}_{p}, \mathbb{Q}_{p}\}$, be a contraction satisfying the bi-Lipschitz inequality \eqref{Eq2_Bi-Lipschitz Contractions}. 
		If the image $R(\mathbb{X}_{p})$ is open, then $R$ is an open mapping. 
		In particular, if $R$ is scaling, then its image $R(\mathbb{X}_{p})$ is $\rho$-open for some $\rho > 0$.
		
	\end{lemma}
	
	\begin{proof}
		Let $R : \mathbb{X}_{p} \to \mathbb{X}_{p}$ be a contraction satisfying the bi-Lipschitz inequality \eqref{Eq2_Bi-Lipschitz Contractions} with constants $0  < c_{1} \le c_{2}  < 1$. 
		
		\paragraph{} 
		
		Assume that the image $R \left(  \mathbb{X}_{p}  \right) $ is open. To prove that $R$ is an open map, it suffices to show that for every ball $B \left(  x, r  \right)  \subseteq \mathbb{X}_{p}$, the image $R \left(  B \left(  x, r  \right)   \right) $ is open.

		Fix $x \in \mathbb{X}_{p}$ and $r  > 0$.  Let $y_{0} \in R \left(  B \left(  x, r  \right)   \right) $. 
		It will be shown that there exists $ \eta  > 0$ such that $B \left(  y_{0},  \eta  \right)  \subseteq R \left(  B \left(  x, r  \right)   \right) $. 
		Choose $x_{0} \in B \left(  x, r  \right) $ such that $R \left(  x_{0}  \right)  = y_{0}$. Since $R \left(  \mathbb{X}_{p}  \right) $ is open, there exists $ \epsilon  > 0$ such that $B \left(  y_{0},  \epsilon  \right)  \subseteq R \left(  \mathbb{X}_{p}  \right) $. 
		Choose $ \eta  < \min \left\{  c_{1} r,    \epsilon  \right\} $ and fix $y_{1} \in B \left(  y_{0},  \eta  \right) $. Since $y_{1} \in B \left(  y_{0},  \epsilon  \right)  \subseteq R \left(  \mathbb{X}_{p}  \right) $, there exists $x_{1} \in \mathbb{X}_{p}$ such that $R \left(  x_{1}  \right)  = y_{1}$. Since $ \eta < c_{1} r$, the bi-Lipschitz inequality~\eqref{Eq2_Bi-Lipschitz Contractions} implies that $ \left| \left|  x_{0} - x_{1}  \right|  \right|_{p} < r$. Consequently, $y_{1} \in R \left(  B \left(  x, r  \right)   \right) $. 
		
		As the choice of $y_{1} \in B \left(  y_{0},  \eta  \right) $ was arbitrary, it follows that $B \left(  y_{0},  \eta  \right)  \subseteq R \left(  B \left(  x, r  \right)   \right) $. Hence $R \left(  B \left(  x, r  \right)   \right) $ is open, and thus $R$ is an open map.

		\paragraph{Scaling case.}
		Assume, in addition, that $R$ is scaling, and let $c_{1} = p^{-l}$ for some integer $l > 1$. It suffices to prove 
		that $R$ is $ \left(  p^{-n_{0}}, p^{-l}  \right) $-locally scaling, that is, there exists an integer $n_{0} \ge 0$ such that, for all $x_{1},x_{2} \in \mathbb{X}_{p}$ satisfying  $  \left| \left|  x_{1}-x_{2}  \right|  \right|_{p}  \le  p^{-n_{0}}$, one has $  \left| \left|  R  \left(  x_{1}  \right)  - R \left(  x_{2}  \right)  \right|  \right|_{p} = p^{-l}  \left| \left|  x_{1} - x_{2}  \right|  \right|_{p}$.  
		Under this condition, the map $p^{l}R$ is an isometry on every ball of the form $B \left(  x,p^{-n_{0}}  \right) $, and hence $R \left(  \mathbb{X}_{p}  \right) $ is $\rho$-open with $\rho = p^{-l-n_{0}}$.

		\paragraph{} 
		
		It is first shown that the sequence $\kappa \left(  p^{-n}  \right) $ is decreasing in $n$. Indeed, one has
		\begin{align*}
			\kappa \left(  p^{- \left(  n+1  \right) }  \right)    &=    \left| \left|  R \left(  p^{n+1}  \right)  - R \left(  0  \right)   \right|  \right|_{p}  \\ 
			&\le   \max \left\{   \left| \left|  R \left(  p^{n+1}  \right)  - R \left(  p^{n}  \right)   \right|  \right|_{p},  \left| \left|  R \left(  p^{n}  \right)  - R \left(  0  \right)   \right|  \right|_{p}  \right\}     =   \kappa \left(  p^{-n}  \right) , 
		\end{align*}
		where the last equality follows from the assumption that $R$ is scaling.

		Next, it is shown that $\kappa \left(  p^{-n}  \right) $ is strictly decreasing. Suppose that $\kappa \left(  p^{-n}  \right)  = \kappa \left(  p^{-n -1}  \right) $ for some $n$, and consider the set  
		$$ A   =     \left\{  a p^{n} + b p^{n+1} :   a = 0, 1, \dots, p-1, \text{ and } b = 0, 1, \dots, p-1  \right\}  . $$ 
		Then $ \left| \left|  R \left(  x  \right)  - R \left(  y  \right)   \right|  \right|_{p} = \kappa \left(  p^{-n}  \right) $ for all distinct $x,y \in A $, so that the set $B =  \left\{  R \left(  x  \right)  :   x \in A  \right\} $ consists of $p^{2}$ points that are pairwise equidistant. This contradicts the fact that such a subset of $\mathbb{X}_{p}$ has cardinality at most $p$. Hence, $\kappa \left(  p^{-n}  \right) $ is strictly decreasing. 
		
		It follows that $\kappa \left(  p^{-n}  \right) / p^{-n}$ is decreasing. Indeed, since $\kappa \left(  p^{-n}  \right) $ is strictly decreasing, one has 
		$$ {\kappa \left(  p^{-n -1}  \right)  \over p^{-n -1} }   \le   { \kappa \left(  p^{-n}  \right) / p \over p^{-n - 1} }   \le   { \kappa \left(  p^{-n}  \right)  \over p^{-n}} . $$
		Since $R$ is bi-Lipschitz, the sequence $\kappa \left(  p^{-n}  \right)  / p^{-n}$ takes values in $ \left\{  p^{k} :   p \in \mathbb{Z}  \right\} $ and is bounded below by $c_{1} > 0$. Therefore, it is eventually constant, which yields the desired property. 
		
		Consequently, $R(\mathbb{X}_{p})$ is $\rho$-open for some $\rho>0$.

	\end{proof}

	The following two lemmas show that the analytic properties of a bi-Lipschitz contraction remain unaffected under the addition of sufficiently small Lipschitz perturbations. More precisely, the address the effect of adding a $\delta$-Lipschitz map, for $\delta > 0$ small enough. The first lemma concerns the case of a bi-Lipschitz contraction with open image. 
	
	\begin{lemma}\label{Lem2_Open bi-Lipschitz Contraction plus Lipschitz}
		Let $R: \mathbb{X}_{p} \to \mathbb{X}_{p}$, with $\mathbb{X}_{p} \in  \left\{  \mathbb{Z}_{p}, \mathbb{Q}_{p}  \right\} $, be a contraction satisfying the bi-Lipschitz inequality \eqref{Eq2_Bi-Lipschitz Contractions} with constants $0  < c_{1} \le c_{2}  < 1$. 
		Then, for every $0  < \delta \le  c_{1}/2  $ and every $\phi \in Lip_{\delta} \left(  \mathbb{X}_{p}  \right) $, the map $T = R + \phi$ is a bi-Lipschitz contraction satisfying the bi-Lipschitz inequality \eqref{Eq2_Bi-Lipschitz Contractions} with the same constants $c_{1}$ and $c_{2}$. 
		Moreover, if the image $R \left(  \mathbb{X}_{p}  \right) $ is a $\rho$-open set for some $\rho  > 0$ and $0  < \delta \le \min \left\{  c_{1}/2,   \rho/2  \right\} $, then $ T \left(  \mathbb{X}_{p}  \right)  = R \left(  \mathbb{X}_{p}  \right) $.
	\end{lemma}

	\begin{proof}
		
		Fix $0  < \delta  <  c_{1}/2$ and $\phi \in Lip_{\delta} \left(  \mathbb{X}_{p}  \right) $. A direct computation shows that the map $T:= R + \phi$ is a contraction and satisfies the bi-Lipschitz inequality \eqref{Eq2_Bi-Lipschitz Contractions} with the same constants $0  < c_{1} \le c_{2}  < 1$.

		\paragraph{}
		Assume that the image $R \left(  \mathbb{X}_{p}  \right) $ is a $\rho$-open set for some $\rho  > 0$, and that $ 0 < \delta  < \min \left\{  c_{1}/2, \rho / 2  \right\} $. 
		
		It will be shown that $T \left(  \mathbb{X}_{p}  \right)  = R \left(  \mathbb{X}_{p}  \right) $. The inclusion $T \left(  \mathbb{X}_{p}  \right)  \subseteq R \left(  \mathbb{X}_{p}  \right) $ follows immediately. Indeed, for every $x \in \mathbb{X}_{p}$, $T \left(  x  \right)  = R \left(  x  \right)  + \phi \left(  x  \right)  \in B \left(  R \left(  x  \right) , \rho  \right)  \subseteq R \left(  \mathbb{X}_{p}  \right) $, since $ \left| \left|  \phi \left(  x \right)   \right|  \right|_{p} \le \delta < \rho$.   
		To prove the inverse inclusion, fix $z \in R \left(  \mathbb{X}_{p}  \right) $. By the bi-Lipschitz inequality \eqref{Eq2_Bi-Lipschitz Contractions}, the map $R$ admits a continuous left inverse $L: R \left(  \mathbb{X}_{p}  \right)  \to \mathbb{X}_{p}$.  
		Define the map $H_{z}: R \left(  \mathbb{X}_{p}  \right)  \to R \left(  \mathbb{X}_{p}  \right) $ by  
		$$ H_{z} \left(  y  \right)    =   z - \phi \circ L (y) .$$ 
		The choice $\delta < \rho$ ensures that $H_{z} \left(  R \left(  \mathbb{X}_{p}  \right)   \right)  \subseteq R \left(  \mathbb{X}_{p}  \right) $. 
		Moreover, the bi-Lipschitz inequality together with the bound on $\delta$ implies that $H_{z}$ is a contraction. Since $R \left(  \mathbb{X}_{p}  \right) $ is complete, the contraction mapping theorem guarantees the existence of a fixed point $y_{0} \in R \left(  \mathbb{X}_{p}  \right) $ such that $H_{z} \left(  y_{0}  \right)  = y_{0}$. Let $x_{0}  \in \mathbb{X}_{p}$ satisfy $R \left(  x_{0}  \right)  = y_{0}$. Then $ z - \phi \left(  x_{0}  \right)  = R \left(  x_{0}  \right) $, and hence $T \left(  x_{0}  \right)   = R \left(  x_{0}  \right)   + \phi \left(  x_{0}  \right)  = z$. 
		Since $z \in R \left(  \mathbb{X}_{p}  \right) $ was arbitrary, it follows that $R \left(  \mathbb{X}_{p}  \right)  \subseteq T \left(  \mathbb{X}_{p}  \right) $. 
		
		Combining the two inclusions yields $T \left(  \mathbb{X}_{p}  \right)  = R \left(  \mathbb{X}_{p}  \right) $, which completes the proof. 
		
	\end{proof}

	The final lemma addresses perturbations of scaling bi-Lipschitz contractions.
	
	\begin{lemma}\label{Lem2_Scaling Contraction plus Lipschitz}
		Let $R: \mathbb{X}_{p} \to \mathbb{X}_{p}$, with $\mathbb{X}_{p} \in  \left\{  \mathbb{Z}_{p}, \mathbb{Q}_{p}  \right\} $, be a contraction satisfying the left-hand side of the bi-Lipschitz inequality \eqref{Eq2_Bi-Lipschitz Contractions} with constant $0  < c_{1}   < 1$. 
		If $R$ is a scaling contraction, then for every $0  < \delta \le  c_{1}/2  $ and for every $\phi \in Lip_{\delta} \left(  \mathbb{X}_{p}  \right) $, the map $T = R + \phi$ is scaling and satisfies 
		\begin{equation}\label{EqLem_Scaling Maps Induction}
			\left| \left|  T^{n}(x) - T^{n}(y)  \right|  \right|_{p}   =    \left| \left|  R^{n}(x) - R^{n}(y)  \right|  \right|_{p}, \quad \text{for every } x,y \in \mathbb{X}_{p} \text{ and } n \in \mathbb{N} . 
		\end{equation} 
		
	\end{lemma}

	\begin{proof}
		
		Assume that $R$ is a scaling contraction satisfying the bi-Lipschitz inequality \eqref{Eq2_Bi-Lipschitz Contractions}. Fix $0  < \delta  <  c_{1}/2$ and $\phi \in Lip_{\delta} \left(  \mathbb{X}_{p}  \right) $. The fact that the map $T = R + \phi$ is scaling follows from identity \eqref{EqLem_Scaling Maps Induction}.

		\paragraph{}
		It therefore remains to establish relation \eqref{EqLem_Scaling Maps Induction}. To this end, it suffices to verify that, for all $x_{1}, x_{2}, y_{1}, y_{2} \in \mathbb{X}_{p}$ satisfying $ \left| \left|  y_{1} - y_{2}  \right|  \right|_{p} =  \left| \left|  x_{1} - x_{2}  \right|  \right|_{p}$, the equality $  \left| \left|  T \left(  y_{1}  \right)  - T \left(  y_{2}  \right)   \right|  \right|_{p} =  \left| \left|  R \left(  x_{1}  \right)  - R \left(  x_{2}  \right)   \right|  \right|_{p} $ holds. 
		Once this property is established, the desired conclusion follows by a straightforward induction. 
		
		Fix $x_{1}, x_{2}, y_{1}, y_{2}\in \mathbb{X}_{p}$ satisfying $ \left| \left|  x_{1} - x_{2}  \right|  \right|_{p} =  \left| \left|  y_{1} - y_{2}  \right|  \right|_{p}$.  Then, since $\phi$ is a $\delta$-Lipschitz map with $\delta < c_{1}$ and by applying the strong triangle inequality,  
		$$ \left| \left|  T \left(  y_{1}  \right)  - T \left(  y_{2}  \right)   \right|  \right|_{p}    =    \left| \left|  R \left(   y_{1}  \right)  - R \left(  y_{2}  \right)  + \phi \left(  y_{1}  \right)  - \phi \left(  y_{2}  \right)   \right|  \right|_{p}   =    \left| \left|  R \left(  y_{1}  \right)  - R \left(  y_{2}  \right)   \right|  \right|_{p} . $$

		Because $R$ is scaling, there exists a map $\kappa:  \left\{  p^{n}:   n \in \mathbb{Z}  \right\}  \to  \left\{  p^{n} :   n \in \mathbb{Z}  \right\}  \cup  \left\{  0  \right\}  $ satisfying relation~\eqref{Eq_Scaling Maps}. Consequently,    
		$$  \left| \left|  R \left(  y_{1}  \right)  - R \left(  y_{2}  \right)   \right|  \right|_{p}   \underset{\eqref{Eq_Scaling Maps}}{=}   \kappa \left(   \left| \left|  y_{1} - y_{2}  \right|  \right|_{p}  \right)    =   \kappa \left(   \left| \left|  x_{1} - x_{2}  \right|  \right|_{p}  \right)    \underset{\eqref{Eq_Scaling Maps}}{=}    \left| \left|  R \left(  x_{1}  \right)  - R \left(  x_{2}  \right)   \right|  \right|_{p}. $$
		This completes the proof.

	\end{proof}

	\paragraph{}

	\begin{proof}[Proof of Theorem \ref{Theor3_Contractions in p-adics}] 
		
		Fix $\mathbb{X}_{p} \in  \left\{  \mathbb{Z}_{p}, \mathbb{Q}_{p}  \right\} $ and let $R: \mathbb{X}_{p} \to \mathbb{X}_{p}$ be a bi-Lipschitz contraction on $\mathbb{X}_{p}$ that satisfies the bi-Lipschitz inequality \eqref{Eq2_Bi-Lipschitz Contractions} with positive constants $0  < c_{1} \le c_{2}  < 1$. Let $x_{R} \in \mathbb{X}_{p}$ denote the unique fixed point of $R$.

		\paragraph{}
		
		Assume, without loss of generality, that $R \left(  \mathbb{X}_{p}  \right) $ is $\rho$-open for some $\rho > 0$. 
		This assumption is satisfied in the present setting. 
		In the case of Point~(1) of Theorem~\ref{Theor3_Contractions in p-adics}, the condition holds because $R \left(  \mathbb{Z}_{p}  \right) $ is open by hypothesis and compact by continuity. 
		For Point~(2), where $R$ is assumed to be scaling, Lemma~\ref{Lem2_Open bi-Lipschitz Contractions} ensures that the image $R \left(  \mathbb{X}_{p}  \right) $ is $\rho$-open.

		\paragraph{}
		
		We now present a unified argument covering both Point~(1) and Point~(2). The specific assumptions required to complete the proof in each case are invoked only at the final stage, in the respective concluding paragraphs.

		\paragraph{}

		Fix $0  < \delta  <  \min  \left\{  \rho/2, c_{1}/2  \right\} $, $\phi \in Lip_{\delta} \left(  \mathbb{X}_{p}  \right) $, and set $T = R + \phi$. By Lemma \ref{Lem2_Open bi-Lipschitz Contraction plus Lipschitz}, the map $T$ is also a bi-Lipschitz contraction satisfying   
		\begin{equation}\label{Eq3_Contraction T Lower Bound}
			c_{1} \cdot  \left| \left|  x - y  \right|  \right|_{p}   \le    \left| \left|  T(x) - T(y)  \right|  \right|_{p}    \le   c_{2} \cdot  \left| \left|  x - y  \right|  \right|_{p}, \quad \text{for all } x,y \in \mathbb{X}_{p}, 
		\end{equation}
		where the constants $c_{1}$ and $ c_{2} $ are the same as in the inequality \eqref{Eq2_Bi-Lipschitz Contractions}. Denote by $x_{T} \in \mathbb{X}_{p}$ the unique fixed point of $T$. 
		
		Lemmas \ref{Lem2_Open bi-Lipschitz Contractions} and \ref{Lem2_Open bi-Lipschitz Contraction plus Lipschitz} together imply that both $R$ and $T$ are open maps with  $ T \left(  \mathbb{X}_{p}  \right)  = R \left(  \mathbb{X}_{p}  \right)  $. Thus, one can define the set  
		\begin{equation}\label{Eq2_Domain of the Contraction}  
			\mathcal{V} :=    \mathbb{X}_{p} \backslash R \left(  \mathbb{X}_{p}  \right)    =   \mathbb{X}_{p} \backslash T \left(  \mathbb{X}_{p}  \right)  .
		\end{equation}
		Note that $\mathcal{V}$ is a $\rho$-open set, as it is the complement of the $\rho$-open set $R \left(  \mathbb{X}_{p}  \right) $ in $\mathbb{X}_{p}$. 
		We also introduce the sets 
		\begin{equation}\label{Eq2_Contraction Domain B}
			A_{R} :=   \bigcup_{n=0}^{+ \infty } R^{n} \left(  \mathcal{V}  \right) , \quad B_{R} :=   \mathbb{X}_{p} \backslash A_{R}, \quad A_{T} :=   \bigcup_{n=0}^{+ \infty } T^{n} \left(  \mathcal{V}  \right) ,   \text{and }   B_{T} :=   \mathbb{X}_{p} \backslash A_{T}.
		\end{equation}

		\paragraph{}
		
		The proof will now proceed by constructing a homeomorphism $h: \mathbb{X}_{p} \to \mathbb{X}_{p}$ such that $R \circ h = h \circ T$.  Specifically, $h$ is given by the formula 
		\begin{equation}\label{Eq3_Conjugation Fundamental Domains A and B}
			h \left(  x  \right)  :=   \begin{cases}
				h_{1} \left(  x  \right) ,  & \text{if } x \in A_{T},  \\ 
				h_{2} \left(  x  \right) ,   & \text{if } x \in B_{T},  \\  
			\end{cases}
		\end{equation}
		where $h_{1}: A_{T} \to A_{R}$ and $h_{2}: B_{T} \to B_{R}$ are bijections, which will be specified below. Along with the definitions of $h_{1}$ and $h_{2}$, we will show that 
		\begin{equation}\label{Eq3_Conjugation h delta close to Identity}
			\left| \left|  h_{1}(x) - x  \right|  \right|_{p}   \le   \delta,   \text{ for all } x \in A_{T}, \text{ and }    \left| \left|  h_{2}(x) - x  \right|  \right|_{p}   \le   \delta,   \text{ for all } x \in B_{T}. 
		\end{equation}
		Finally, we will prove the continuity of $h$ and $h^{-1}$.

		To define the maps $h_{1}$ and $h_{2}$, we use the following lemma, which can be readily proved.   
		
		\begin{lemma}\label{Lem2_Domain of the Contraction}
			Let $R: \mathbb{X}_{p} \to \mathbb{X}_{p}$ be a left invertible contraction. Then 
			\begin{equation}\label{Eq2_Partition Domain}
				\mathbb{X}_{p}   =   B_{R} \cup \bigcup_{n=0}^{+ \infty } R^{n} \left(  \mathcal{V}  \right)  , 
			\end{equation}
			where $ \mathcal{V} $ and $B_{R}$ are as defined in relations \eqref{Eq2_Domain of the Contraction} and \eqref{Eq2_Contraction Domain B}, respectively.
			In particular, the sets $B_{R}$ and $R^{n} \left(  \mathcal{V}  \right) $, for $n \in \mathbb{N} $,  partition $\mathbb{X}_{p}$ and satisfy 
			\begin{equation}\label{Eq2_Partition Domain Explicit}
				R^{n} \left(  \mathcal{V}  \right)    =   R^{n} \left(  \mathbb{X}_{p}  \right)  \backslash R^{n+1} \left(  \mathbb{X}_{p}  \right) , \quad \text{for every }  n \in  \mathbb{N} .  
			\end{equation}
			
			Moreover, the restriction of $R$ to $B_{R}$, i.e., $R: B_{R} \to B_{R}$, is a homeomorphism, and $x_{R} \in B_{R}$. 
			
		\end{lemma}
		
		\paragraph{}
		
		To define the bijection $h_{1}$, observe that inequalities \eqref{Eq2_Bi-Lipschitz Contractions} and \eqref{Eq3_Contraction T Lower Bound} imply the existence of two continuous bijections $L: R \left(  \mathbb{X}_{p}  \right)  \to \mathbb{X}_{p}$ and $\Tilde{L}: T \left(  \mathbb{X}_{p}  \right)  \to \mathbb{X}_{p}$ that satisfy the relations $L \circ R = id$ and $ \Tilde{L} \circ T = id$, respectively. Therefore, Lemma \ref{Lem2_Domain of the Contraction} guarantees that the map $h_{1}: A_{T} \to A_{R}$, defined by the formula  
		\begin{equation}\label{Eq3_Conjugation Fundamental Domain A}
			h_{1} \left(  x  \right)   :=   R^{n} \left(  v  \right) , \quad \text{where } x = T^{n} \left(  v  \right)  \in A_{T}   \text{ for some } v \in \mathcal{V} \text{ and } n \in  \mathbb{N} ,   
		\end{equation} 
		is a well-defined bijection. 
		Moreover, $h_{1}$  satisfies the left-hand side of \eqref{Eq3_Conjugation h delta close to Identity}. Indeed, fix $ x \in A_{T}$, where $x = T^{n} \left(  v  \right) $ for some $v \in \mathcal{V}$ and $n \in \mathbb{N} $. Then, one has that $h_{1}(x) - x = R^{n}(v) - T^{n}(v)$. 
		If $n=0$, it is clear that $ \left| \left|  R^{0}(v) - T^{0}(v)  \right|  \right|_{p} = 0$. If $n \ge 1$, then the claim follows inductively from the inequality 
		\begin{align*}
			\left| \left|  R^{n}(v) - T^{n}(v)  \right|  \right|_{p}   & =    \left| \left|  R\circ R^{n-1} \left(  v  \right)  - R\circ T^{n-1} \left(  v  \right)  - \phi\circ T^{n-1} \left(  v  \right)   \right|  \right|_{p} \\ 
			& \underset{\eqref{Eq2_Bi-Lipschitz Contractions}}{\le}   \max \left\{  \delta, c_{2}\cdot  \left| \left|  R^{n-1}(v) - T^{n-1}(v)  \right|  \right|_{p}  \right\}  .
		\end{align*}
		Finally, observe that the map $h_{1}$ conjugates $R$ and $T$. Indeed, one has that  
		\begin{equation}\label{Eq3_Homeomorphism 1 Conjugation}
			R\circ h_{1}(x)    \underset{\eqref{Eq3_Conjugation Fundamental Domain A}}{=}    R^{n+1}(v)    \underset{\eqref{Eq3_Conjugation Fundamental Domain A}}{=}    h_{1}\circ T^{n+1}(v)    \underset{\eqref{Eq3_Conjugation Fundamental Domain A}}{=}    h_{1}\circ T(x) .
		\end{equation}

		\paragraph{} 
		
		To define the bijection $h_{2} : B_{T} \to B_{R} $, let 
		$$  \ell^{ \infty } \left(  \mathbb{Z} , \mathbb{X}_{p}  \right)  : =    \left\{   \left(  x_{n}   \right)_{n \in \mathbb{Z} } \in \mathbb{X}_{p}^{\mathbb{Z} } :   \sup_{n \in \mathbb{Z} }  \left| \left|  x_{n}  \right|  \right|_{p}   <   + \infty   \right\}  $$
		be the Banach space of bi-infinite bounded sequences in $\mathbb{X}_{p}$. 
		Endowed with the supremum norm $ \left| \left|  \cdot  \right|  \right|_{\ell^{ \infty } \left(  \mathbb{Z} , \mathbb{X}_{p}  \right) }$, the space $ \left(  \ell^{ \infty } \left(  \mathbb{Z} , \mathbb{X}_{p}  \right) ,  \left| \left|  \cdot  \right|  \right|_{\ell^{ \infty } \left(  \mathbb{Z} , \mathbb{X}_{p}  \right) }  \right) $ is complete. Notice that $\ell^{ \infty } \left(  \mathbb{Z} , \mathbb{Z}_{p}  \right) $ is the closed unit ball of the $\mathbb{Q}_{p}$-Banach space $\ell^{ \infty } \left(  \mathbb{Z} , \mathbb{Q}_{p}  \right) $ and, in particular, $\ell^{ \infty } \left(  \mathbb{Z} , \mathbb{Z}_{p}  \right)  = \mathbb{Z}_{p}^{\mathbb{Z} }$. 
		
		Fix $x \in B_{T}$. By Lemma \ref{Lem2_Domain of the Contraction}, the restriction $T: B_{T} \to B_{T}$ is a homeomorphism. Consequently, for all $x \in B_{T}$, the iterates $T^{n} \left(  x  \right)  \in B_{T}$ are well defined for all $n \in \mathbb{Z} $. This allows the definition of the map $W_{x}: \ell^{ \infty } \left(  \mathbb{Z} , \mathbb{X}_{p}  \right)  \to \ell^{ \infty } \left(  \mathbb{Z} , \mathbb{X}_{p}  \right) $ by  
		\begin{equation}\label{Eq3_Fixed Point Operator}
			W_{x} \left(   \left(  u_{n}   \right)_{n \in \mathbb{Z} }  \right)  :=    \left(  R \left(  T^{n-1}(x) + u_{n-1}   \right)  -  T^{n}(x)      \right)_{n \in \mathbb{Z} } .
		\end{equation} 
		This map is well-defined and is a contraction on $\ell^{ \infty } \left(  \mathbb{Z} , \mathbb{X}_{p}  \right) $, and therefore admits a unique fixed point $ \left(  z_{n} \left(  x  \right)    \right)_{n \in \mathbb{Z} }$, which in particular satisfies 
		\begin{equation}\label{Eq3_Fixed Point Recursive Relation} 
			R \left(  T^{n-1} \left(  x  \right)  + z_{n-1} \left(  x  \right)   \right)    =   T^{n} \left(  x  \right)  + z_{n} \left(  x  \right)  , \quad \text{for all } n \in \mathbb{Z} .
		\end{equation}

		Define $h_{2}: B_{T} \to B_{R}$ by  
		\begin{equation}\label{Eq2_Conjugation Fundamental Domain B}
			h_{2} \left(  x  \right)  :=   x + z_{0} \left(  x  \right)  
		\end{equation}
		To verify that $h_{2}$ is well defined, it remains to show that $h_{2} \left(  x  \right)  \in B_{R}$. To this end, relation \eqref{Eq3_Fixed Point Recursive Relation} implies that 
		\begin{equation}\label{Eq3_Inverse Images of B_R Points}
			T^{-n} \left(  x  \right)  + z_{-n} \left(  x  \right)    \in   R^{-n} \left\{  h_{2}  \left(  x  \right)   \right\}  \quad \text{for every } n \ge 0 .
		\end{equation} 
		This yields the claim immediately. Indeed, if $ h_{2}(x) \in A_{R}$, then there exist $v \in \mathcal{V}$ and $n_{0} \in \mathbb{N} $ such that $ h_{2}(x) = R^{n_{0}}(v)$. 
		In that case, $R^{-n_{0} -1 } \left\{   h_{2}(x)  \right\}  = \emptyset$, which contradicts~\eqref{Eq3_Inverse Images of B_R Points}.

		The map $h_{2}: B_{T} \to B_{R}$ is a bijection. To prove this, we define the map $\tilde{h}_{2}: B_{R} \to B_{T}$ in the same way as $h_{2}$, but with the roles of $R$ and $T$ reversed. 
		For every $x \in B_{R}$ and $y \in B_{T}$, the uniqueness of the points $h_{2} \left(  x  \right) $ and $\tilde{h}_{2} \left(  y  \right) $ implies that  $h_{2} \circ \tilde{h}_{2} \left(  y  \right)  = y$ and $\tilde{h}_{2} \circ h_{2} \left(  x  \right)   = x$. Thus, we conclude that $h_{2}$ is a bijection.

		To demonstrate that $h_{2}$ satisfies the right-hand side inequality of \eqref{Eq3_Conjugation h delta close to Identity}, we begin by observing the following:   
		\begin{align*}  \left| \left|  z_{n} \left(  x  \right)   \right|  \right|_{p}   &
			\underset{\eqref{Eq3_Fixed Point Operator}}{=}    \left| \left|  R \left(  T^{n-1} \left(  x  \right)  + z_{n-1} \left(  x  \right)   \right)  - T^{n} \left(  x  \right)    \right|  \right|_{p}  \\  
			& =    \left| \left|  R \left(  T^{n-1} \left(  x  \right)  + z_{n-1} \left(  x  \right)   \right)  - R \left(  T^{n-1} \left(  x  \right)   \right)  - \phi \left(  T^{n-1} \left(  x  \right)   \right)    \right|  \right|_{p}   \\  
			& \le    \max \left\{  c_{2}  \left| \left|  z_{n-1} \left(  x  \right)   \right|  \right|_{p},   \delta  \right\} ,
		\end{align*}
		for all $n \in \mathbb{Z} $. Since $ \left(  z_{n}   \right)_{n \in \mathbb{Z} } \in \ell^{ \infty } \left(  \mathbb{Z} , \mathbb{X}_{p}  \right) $, this recurrence relation implies that:  
		\begin{equation}\label{Eq3_Fixed Point delta small}
			\left| \left|  z_{n} \left(  x  \right)   \right|  \right|_{p}   \le   \delta, \quad \text{for every } x \in B_{T} \text{ and } n \in \mathbb{Z} . 
		\end{equation}
		This inequality ensures that $h_{2}$ satisfies the desired bound in \eqref{Eq3_Conjugation h delta close to Identity}.

		Finally, observe that the map $h_{2}$ conjugates $R$ and $T$. 
		Indeed, since the maps $W_{x}, W_{T \left(  x  \right) } : \ell^{ \infty }_{\mathbb{Z} } \left(  \mathbb{X}_{p}  \right)  \to \ell^{ \infty }_{\mathbb{Z} } \left(  \mathbb{X}_{p}  \right) $ are contractions, each admits a unique fixed point. Applying relation~\eqref{Eq3_Fixed Point Recursive Relation} to $x$ and $T \left(  x  \right) $, respectively, yields $ z_{n} \left(  x  \right)  = z_{n - 1} \left(  T \left(  x  \right)   \right) $, for all $n \in \mathbb{Z} $. Consequently,  
		\begin{align*} 
			R \circ h_{2} \left(  x  \right)    &\underset{\eqref{Eq2_Conjugation Fundamental Domain B}}{=}   R \left(  x + z_{0} \left(  x  \right)   \right)    \underset{\eqref{Eq3_Fixed Point Recursive Relation}}{=}   T \left(  x  \right)  + z_{1} \left(  x  \right)   \\  
			&=   T \left(  x  \right)  + z_{0} \left(  T \left(  x  \right)   \right)    =   h_{2} \circ T \left(  x  \right)  .
		\end{align*}
		Therefore, for every $x \in B_{T}$,  
		\begin{equation}\label{Eq3_Homeorphism 2 Conjugation}
			R\circ h_{2} \left(  x  \right)    =   h_{2} \circ T(x) .
		\end{equation}

		\paragraph{} 
		
		To prove that $R$ is strongly Lipschitz structurally stable, it remains to show that the conjugation map $h$ and its inverse $h^{-1}$ are continuous. 
		We prove the result for $h$ only, as the proof for $h^{-1}$ follows similarly. 
		
		\medskip
		
		First, it is shown that $h$ is continuous at every point $x \in A_{T}$. The set $A_{T}$ is open, being the union of the sets $T^{n} \left(  \mathcal{V}  \right) $ for $n \in \mathbb{N} $, where each $T^{n} \left(  \mathcal{V}  \right)  $ is open because $T$ is an open map and $\mathcal{V}$ is $\rho$-open. 
		Fix $x \in A_{T} $, and let $m \in \mathbb{N} $ and $v \in \mathcal{V}$ be such that $x = T^{m}(v)$. Choose $  \epsilon > 0$ sufficiently small so that the radius 
		$$  \eta :=   {c_{1}^{m} \over c_{2}^{m}} \cdot  \epsilon $$
		satisfies $B \left(  x,  \eta  \right)  \subseteq T^{m} \left(  \mathcal{V}  \right)  $. 
		For every $x' \in B \left(  x ,  \eta  \right) $, there exists $v' \in \mathcal{V}$ such that $x' = T^{m} \left(  v'  \right) $. Inequality~\eqref{Eq3_Contraction T Lower Bound} then yields  $ \left| \left|  v - v'  \right|  \right|_{p} \le \epsilon / c_{2}^{m}$, while inequality~\eqref{Eq2_Bi-Lipschitz Contractions} implies  
		$$  \left| \left|  h \left(  x  \right)  - h \left(  x'  \right)   \right|  \right|_{p}   =    \left| \left|  h_{1}(x) - h_{1} \left(  x' \right)   \right|  \right|_{p}   =    \left| \left|  R^{m} \left(  v  \right)  - R^{m} \left(  v'  \right)   \right|  \right|_{p}   \le    \epsilon . $$ 	
		Since $x \in A_{T}$ and $\epsilon  > 0$ were arbitrary, this proves that $h$ is continuous on $A_{T}$.

		\medskip
		
		The continuity of $h$ at points $x \in B_{T}$ is established separately in each of the cases considered in the theorem. More precisely, in the setting of Point~(1) the argument relies on the compactness of the space $\mathbb{Z}_{p}$, whereas in the setting of Point~(2) it uses the assumption that $R$ is scaling.

		\paragraph{Proof in the case of Point (1).} 
		
		Assume that $\mathbb{X}_{p} = \mathbb{Z}_{p}$. 
		Note that $B_{T} =  \left\{  x_{T}  \right\} $ and $B_{R} =  \left\{  x_{R}  \right\} $. To demonstrate this, assume there exists $x \in B_{T}$ such that $x \neq x_{T}$. By the construction of the set $B_{T}$, one has that $T \left(  B_{T}  \right)  = B_{T}$. Let $\textrm{diam} \left(  B_{T}  \right) $ be the diameter of $ B_{T}$, defined as   
		$$ \textrm{diam} \left(  B_{T}  \right)  :=   \sup \left\{   \left| \left|  x_{1} - x_{2}  \right|  \right|_{p} :   x_{1}, x_{2} \in B_{T}  \right\}    <   + \infty  .$$
		Assume $\textrm{diam} \left(  B_{T}  \right)   > 0$. Since $R$ is a contraction, one has that 
		$$ \textrm{diam} \left(  B_{T}  \right)    =   \textrm{diam} \left(  T \left(  B_{T}  \right)   \right)     \le   c_{2} \cdot \textrm{diam} \left(  B_{T}  \right)  ,$$
		which contradicts the assumption that $\textrm{diam} \left(  B_{T}  \right)   > 0$. Therefore, $\textrm{diam} \left(  B_{T}  \right)  =0$, and thus $ B_{T}  =  \left\{  x_{T}  \right\} $. 
		A similar argument applies for $B_{R}$, yielding $B_{R} =  \left\{  x_{R}  \right\} $.

		To conclude the proof, it remains to show that $h$ is continuous at $x_{T}$. 
		To this end, fix $ \eta  > 0$ and assume that $x \in \mathbb{Z}_{p}$ satisfies $ \left| \left|  x_{T} - x  \right|  \right|_{p} \le  \eta $ and $x \neq x_{T}$. Let $v = v \left(  x  \right)  \in \mathcal{V} $ and $n = n \left(  x  \right)  \in \mathbb{N} $ be such that $x = T^{n}(v)$. 
		It suffices to prove that $n \left(  x  \right)  \to + \infty $ as $x \to x_{T}$, since
		\begin{equation}\label{Eq2_Fixed Point Continuity}
			\left| \left|  h \left(  x_{T}  \right)  - h \left(  x  \right)   \right|  \right|_{p}   =    \left| \left|  x_{R} - R^{n}(v)  \right|  \right|_{p}    =    \left| \left|  R^{n} \left(  x_{R}  \right)  - R^{n}(v)  \right|  \right|_{p}   \underset{\eqref{Eq2_Bi-Lipschitz Contractions}}{\le}   c_{2}^{n} . 
		\end{equation}
		To this end, for every $k \in \mathbb{N} $, define $\eta_{k} := \inf  \left\{   \left| \left|  x_{T} - x'  \right|  \right|_{p} :   x' \in T^{k} \left(  \mathcal{V}  \right)   \right\} $.
		Due to the compactness of $\mathcal{V}$, one infers that $\eta_{k}  > 0$ for all $k \in \mathbb{N} $.
		Moreover, since $T$ is a contraction, the sequence $ \left\{  \eta_{k}  \right\}_{k \in  \mathbb{N} }$ is strictly decreasing, and thus $\eta_{k} {\longrightarrow} 0$ as $k \to + \infty $. 
		Let $k_{ \eta} \in \mathbb{N} $ be such that $\eta_{k_{ \eta} + 1}  <  \eta \le \eta_{k_{ \eta}} $. From the definition of the sequence $ \left(  \eta_{k}   \right)_{k \in  \mathbb{N} }$, one has that $n \ge k \left(   \eta  \right)  $. 
		Therefore, as $ \eta \to 0$, one obtains $\underset{ \eta \to 0}{\lim} k_{ \eta} = + \infty $, which implies that $\underset{ x \to x_{T} }{\lim} n(x) = + \infty $. 
		This shows that $h$ is continuous at $x_{T}$.

		\paragraph{Proof in the case of Point (2).} Assume that $\mathbb{X}_{p} = \mathbb{Q}_{p}$ and that $R$ is scaling. Recall that, by Lemma \ref{Lem2_Open bi-Lipschitz Contractions}, the map $R$ is open and its image $R \left(  \mathbb{X}_{p}  \right) $ is $\rho$-open for some $\rho  > 0$.

		\paragraph{} 	
		
		To prove that $h$ is continuous on $B_{T}$, we will use two claims. In the following, let $x \in B_{T}$ be fixed, and we will establish the continuity of $h$ at this point.
		
		\begin{claim}\label{Claim3 Homeomorphism 2 on B_T}
			For every $ \epsilon  > 0$, there exists $ 0 <  \eta <  \epsilon $ such that for all $x' \in B_{T}$ with $ \left| \left|  x - x'  \right|  \right|_{p} \le  \eta $, we have $  \left| \left|  h_{2} \left(  x  \right)   - h_{2} \left(  x'  \right)   \right|  \right|_{p}  \le  \epsilon $. 
			
		\end{claim}
		
		\begin{proof}[Proof of Claim \ref{Claim3 Homeomorphism 2 on B_T}]
			
			Fix $  \epsilon  > 0$ and $x' \in B_{T}$ such that $ \left| \left|  x - x'  \right|  \right|_{p} \le  \eta $, where $ 0 <  \eta <  \epsilon $ is a positive number that will be determined later. 
			To prove that $ \left| \left|  h_{2}(x) - h_{2} \left(  x'  \right)   \right|  \right|_{p} \le  \epsilon $, it suffices to show that $ \left| \left|  z_{0}(x) - z_{0} \left(  x'  \right)   \right|  \right|_{p} \le  \epsilon $, based on relation \eqref{Eq2_Conjugation Fundamental Domain B}.    
			Since $ \left(  z_{n} \left(  x  \right)    \right)_{n \in \mathbb{Z} } \in \ell^{ \infty } \left(  \mathbb{Z} , \mathbb{X}_{p}  \right) $ and $ \left(  z_{n} \left(  x'  \right)    \right)_{n \in \mathbb{Z} } \in \ell^{ \infty } \left(  \mathbb{Z} , \mathbb{X}_{p}  \right) $ are the fixed point of the maps $W_{x}$ and $W_{x'}$, respectively, from relation \eqref{Eq3_Fixed Point Operator}, one infers that for every $m \in \mathbb{Z} $ it holds that
			
			\begin{align*} 
				\left| \left|  z_{m} \left(  x  \right)  - z_{m} \left(  x'  \right)   \right|  \right|_{p}   \le    \max \left\{  \begin{matrix}  \left| \left|  T^{m} \left(  x  \right)  - T^{m} \left(  x'  \right)   \right|  \right|_{p},  \\    c_{2}   \left| \left|  T^{m-1} \left(  x  \right)  - T^{m-1} \left(  x'  \right)   \right|  \right|_{p},  \\ 
					c_{2}  \left| \left|  z_{m-1} \left(  x  \right)  - z_{m-1} \left(  x'  \right)   \right|  \right|_{p} \end{matrix}  \right\}  .
			\end{align*}
			
			By applying this inequality recursively for $m$ ranging from $0$ to $-n$, where $n \ge 0$ is a nonnegative integer, we obtain the following estimate: 
			\begin{align*}
				\left| \left|  z_{0} \left(  x  \right)  - z_{0} \left(  x'  \right)   \right|  \right|_{p}   & \le    \max \left\{  \begin{matrix}   \left| \left|  x - x'  \right|  \right|_{p} ,  \\  
					\max \left\{   c_{2}^{m}  \left| \left|  T^{-m} \left(  x  \right)  - T^{-m} \left(  x'  \right)   \right|  \right|_{p} :   1 \le m \le n  \right\}  ,   \\  
					c_{2}^{n}  \left| \left|  z_{-n} \left(  x  \right)  - z_{-n} \left(  x'  \right)   \right|  \right|_{p} \end{matrix}  \right\}    \\ 
			\end{align*}
			Upon taking into account inequality \eqref{Eq3_Fixed Point delta small}, the right-hand side of this inequality can be made smaller than $ \epsilon  > 0$. This can be done by first selecting $n \in \mathbb{N} $ sufficiently large and then choosing $ \eta  > 0$ small enough. 
			
			The proof of the claim is complete.

		\end{proof}

		\begin{claim}\label{Claim3 Homeomorphism 2 on A_T}
			Let $ \eta  > 0$ be a positive number, and  $T^{n} \left(  v  \right)  \in A_{T}$ for some $v \in \mathcal{V}$  be such that $ \left| \left|  x - T^{n} \left(  v  \right)   \right|  \right|_{p} \le  \eta $. Then, the following inequality holds:  
			$$   \left| \left|  h \left(  x  \right)   - h \left(  T^{n} \left(  v  \right)   \right)   \right|  \right|_{p}    \le     \max \left\{  \delta  c_{2}^{n_{ \eta}} ,    \eta  \right\}  ,$$
			where $n_{ \eta} \in \mathbb{N} $ is the largest non-negative integer such that $   \eta \cdot  \left(  1/ c_{1}  \right) ^{n_{ \eta}} \le \rho $. 
			
		\end{claim}
		
		\begin{proof}[Proof of Claim \ref{Claim3 Homeomorphism 2 on A_T}]
			Fix $  \eta  > 0$ and a point $T^{n} \left(  v  \right)  \in A_{T}$, where $v \in \mathcal{V}$ and $ \left| \left|  x - T^{n}(v)  \right|  \right|_{p} \le  \eta $. The choice of $n_{ \eta} $ guarantees that $n \ge n_{ \eta}$. Indeed, it ensures that  
			$$ \left| \left|  T^{-n_{ \eta}}(x) -  T^{- n_{ \eta}} \left(  T^{n}(v)  \right)   \right|  \right|_{p} \le \rho. $$
			Therefore since $T \left(  \mathbb{Q}_{p}  \right) $ is $\rho$-open, it follows that  $T^{n - n_{ \eta}}(v) \in T \left(  \mathbb{Q}_{p}  \right) $. In other words, the point $T^{n} \left(  v  \right) $ is at least $n_{ \eta}$-times invertible by the map $T$.

			By Lemma \ref{Lem2_Scaling Contraction plus Lipschitz}, one has that
			$$  \left| \left|  R^{n} \left(  T^{-n}(x)  \right)  - R^{n}(v)  \right|  \right|_{p}   \underset{\eqref{EqLem_Scaling Maps Induction}}{=}    \left| \left|  x - T^{n}(v)  \right|  \right|_{p} . $$ 
			Furthermore, it holds that: $  \left| \left|  h\circ T^{-n}(x)  - T^{-n}(x)  \right|  \right|_{p} \le \delta$ and $ R^{-n}\circ h(x)  = h\circ T^{-n} (x) $.
			Therefore,  one has that
			\begin{align*}
				\left| \left|  h(x) - h \left(  T^{n}(v)  \right)   \right|  \right|_{p}  & =    \left| \left|  R^{n}\circ R^{-n} \circ h(x) - R^{n}(v)  \right|  \right|_{p}  \\ 
				&=    \left| \left|  R^{n}\circ h \circ T^{-n}(x) - R^{n}\circ T^{-n} (x) + R^{n} \circ T^{-n}(x)   - R^{n}(v)  \right|  \right|_{p}  \\ 
				& \underset{\eqref{EqLem_Scaling Maps Induction}}{\le}   \max  \left\{  \delta  c_{2}^{n},    \left| \left|  x - T^{n}(v)  \right|  \right|_{p}  \right\}     \le   \max \left\{  \delta  c_{2}^{n_{ \eta}} ,    \eta  \right\} , 
			\end{align*}
			where the last inequality follows from the assumption. 
			This completes the proof of the claim. 
			
		\end{proof}

		From the Claims \ref{Claim3 Homeomorphism 2 on B_T} and \ref{Claim3 Homeomorphism 2 on A_T}, we have shown that $h$ is continuous at every point $x \in B_{T}$. Specifically, the first claim establishes a continuity condition on $h$ for small perturbations within $B_{T}$, while the second claim provides a bound on the continuity of $h$ when $x$ is close to a point in $A_{T}$. Therefore, $h$ is continuous on $B_{T}$.

		The proof of the theorem is complete.

	\end{proof}

	\paragraph{Acknowledgments}
	The second author was supported by the PROPE–UNESP Grant 013/2023. The third author was partially supported by CNPq under Grant 310784/2021-2 and by FAPESP Projects  2024/15612-6 and 2024/04685-2. The fourth author was supported by FAPESP Grants 2023/06371-2 and 2024/10135-5.
	
	The authors thank the anonymous referees for their careful reading and valuable comments, and for identifying an error in an earlier version of the manuscript. Special thanks are due to the second referee for providing the example of the contraction $R_{3}$ presented at the end of Section~\ref{Sec_Main Results}, as well as for suggesting an alternative proof of Lemma~\ref{Lem2_Open bi-Lipschitz Contractions}, which has been adapted in the final version.

	\vspace{1em}
	\noindent
	Danilo Antonio Caprio
	\newline
	UNESP -  Departamento de matem\'atica,
	Faculdade de Engenharia - Campus de Ilha Solteira.
	\newline Avenida Brasil, 56, Centro - Ilha Solteira, 15385-000, SP, Brasil.
	\newline
	e-mail: {\rm \texttt{danilo.caprio@unesp.br }}
	\newline
	\newline
	Fernando Lenarduzzi 
	\newline
	Faculdade ESEG - Grupo ETAPA. 
	\newline Rua Vergueiro, 1549, Vila Mariana, 04101-000, SP, Brasil. 
	\newline 
	e-mail: {\rm \texttt{matematica@etapa.com.br}}
	\newline 
	\newline
	Ali Messaoudi
	\newline
	UNESP - Departamento de matem\'atica, Instituto de Bioci\^encias Letras e Ci\^encias Exatas.
	\newline Rua Crist\'ov\~ao Colombo, 2265, Jardim Nazareth, S\~ao Jos\'e do Rio Preto, 15054-000, SP, Brasil.
	\newline
	e-mail: {\rm \texttt{ali.messaoudi@unesp.br}}
	\newline
	\newline
	Ioannis Tsokanos
	\newline
	UNESP - Departamento de matem\'atica, Instituto de Bioci\^encias Letras e Ci\^encias Exatas.
	\newline 
	Rua Crist\'ov\~ao Colombo, 2265, Jardim Nazareth, S\~ao Jos\'e do Rio Preto, 
	15054-000, SP, Brasil. 
	\newline
	e-mail: {\rm \texttt{ioannis.tsokanos@unesp.br }}

	% \printbibliography[title={Bibliography}]

\end{document}